\theoremstyle{plain}% Theorem-like structures provided by amsthm.sty
\newtheorem{theorem}{Theorem}[section]
\newtheorem{corollary}[theorem]{Corollary}
\theoremstyle{definition}
\theoremstyle{remark}
\newtheorem{remark}{Remark}
    \DeclareMathOperator\supp{supp}
    \DeclareMathOperator\meas{meas}
\begin{document}

\title{A note on the nonexistence of global solutions to the semilinear wave equation with nonlinearity of derivative-type in the generalized Einstein -- de Sitter spacetime}

\author{Makram Hamouda$\,^\mathrm{a}$, Mohamed Ali Hamza$\,^\mathrm{a}$, Alessandro Palmieri$\,^\mathrm{b}$}

%\footnote{email addresses: mmhamouda@iau.edu.sa (M. Hamouda), mahamza@iau.edu.sa (M.A. Hamza), alessandro.palmieri.math@gmail.com (A. Palmieri)}

\date{\small{ $\,^\mathrm{a}$ Basic Sciences Department, Deanship of Preparatory Year and Supporting Studies, P. O. Box 1982, Imam Abdulrahman Bin Faisal University, Dammam, KSA.} \\ 
\small{ $\,^\mathrm{b}$ Department of Mathematics, University of Pisa, Largo B. Pontecorvo 5, 56127 Pisa, Italy} \\ [2ex] \normalsize{\today} }
\maketitle

\begin{abstract}
In this paper, we establish blow-up results for the semilinear wave equation in generalized Einstein-de Sitter spacetime with nonlinearity of derivative type. Our approach is based on the integral representation formula for the solution to the corresponding linear problem in the one-dimensional case, that we will determine through Yagdjian's Integral Transform approach. As upper bound for the exponent of the nonlinear term, we discover a Glassey-type exponent which depends both on the space dimension and on the Lorentzian metric in the generalized Einstein-de Sitter spacetime.
\end{abstract}

\begin{flushleft}
\textbf{Keywords} Wave equation, Einstein-de Sitter spacetime, Integral transform, Blow-up, Glassey exponent, Nonlinearity of derivative type
\end{flushleft}

\begin{flushleft}
\textbf{AMS Classification (2010)} Primary: 35B44, 35C15, 35L71; Secondary: 35A08, 35B33, 35L15
\end{flushleft}

\section{Introduction}

The aim of the present paper is to establish a blow-up result for local in time solutions to the Cauchy problem with \emph{nonlinearity of derivative type} $|\partial_t u|^p$
\begin{align}\label{semilinear WE in EdS derivative type}
\begin{cases} \partial_t^2u-t^{-2k}\Delta u + 2t^{-1} \partial_t u=|\partial_t u|^p, &  x\in \mathbb{R}^n, \ t>1,\\
u(1,x)=\varepsilon u_0(x), & x\in \mathbb{R}^n, \\ u_t(1,x)=\varepsilon u_1(x), & x\in \mathbb{R}^n,
\end{cases}
\end{align} where $k\in (0,1)$, $p>1$ and $\varepsilon$ is a positive constant describing the size of Cauchy data.

In the related literature (see, for example, \cite{GalYag17EdS}), the differential operator with time-dependent coefficients on the left-hand side of \eqref{semilinear WE in EdS derivative type} is called the wave operator on the \emph{generalized Einstein-de Sitter spacetime}. This nomenclature is due to the fact that for $k=\frac{2}{3}$ and $n=3$ the operator $\partial_t^2-t^{-\frac{4}{3}}\Delta+2t^{-1}\partial_t$ coincides with the d'Alembertian operator in Einstein-de Sitter's Lorentzian metric.

In recent years, many papers have been devoted to the study of blow-up results and lifespan estimates for the semilinear wave equation in the generalized Einstein - de Sitter (EdS) spacetime with power nonlinearities \cite{GalYag17EdS,Pal20EdS} and generalizations \cite{TW20,TW20h,Pal20EdSmu}. More specifically, it has been conjectured that the critical exponent for the semilinear Cauchy problem with \emph{power nonlinearity} $|u|^p$
\begin{align}\label{semilinear WE in EdS power nonlinearity}
\begin{cases} \partial_t^2u-t^{-2k}\Delta u + 2t^{-1} \partial_t u=|u|^p, &  x\in \mathbb{R}^n, \ t>1,\\
u(1,x)=\varepsilon u_0(x), & x\in \mathbb{R}^n, \\ u_t(1,x)=\varepsilon u_1(x), & x\in \mathbb{R}^n,
\end{cases}
\end{align} is given by the competition between two exponents, namely,
\begin{align}
\max\left\{p_0(n,k), 1+\tfrac{2}{(1-k)n}\right\}, \label{crit exp u^p}
\end{align} where $p_0(n,k)$ is the greatest root of the quadratic equation
\begin{align*}
((1-k)n+1)(p-1)^2+((1-k)n-1-2k)(p-1)-4=0.
\end{align*} The results from the previously quoted papers have been obtained via four different approaches, namely, Kato's type lemma, iteration argument (together with a slicing procedure in the critical case), comparison argument, and test function method. We can summarize these results by saying that for local in time solutions to \eqref{semilinear WE in EdS power nonlinearity} it has been shown the validity of a blow-up result (under suitable sign and support assumptions for the Cauchy data) whenever the exponent of the power nonlinearity $|u|^p$ fulfills $1<p\leqslant \max\{p_0(n,k), 1+\tfrac{2}{(1-k)n}\}$. Due to these nonexistence results, it has been conjecture that the exponent in \eqref{crit exp u^p} is critical, even though the global existence of small data solutions is a completely open problem to the best of the authors' knowledge.

The goal of the present work is to establish a blow-up result for \eqref{semilinear WE in EdS derivative type} and to determine a candidate as critical exponent somehow related to the so-called \emph{Glassey exponent}
\begin{align}\label{def Glassey exponent}
p_{\mathrm{Gla}}(n)\doteq \begin{cases} \frac{n+1}{n-1} & \mbox{if} \ n>1, \\ \infty & \mbox{if} \ n=1, \end{cases}
\end{align}
 which is the critical exponent for the semilinear wave equation with nonlinearity of derivative type \cite{Joh81,Mas83,Sid83,Sch86,Ram87,Age91,HT95,Tzv98,Zhou01,HWY12}.

Our approach consists in a modification of Zhou's approach for the corresponding semilinear wave equation in \cite{Zhou01}. Consequently, we will devote the first part of the paper to the proof of an integral representation formula for the solution to the linear inhomogeneous Cauchy problem associated with \eqref{semilinear WE in EdS derivative type} in the case $n=1$ via Yagjian's Integral Transform approach. 

We consider also a second semilinear model, that can be studied analogously with the tools of the present work. If we delete the linear damping term $2t^{-1}\partial_t u$ in \eqref{semilinear WE in EdS derivative type}, we obtain somehow a semilinear \emph{Tricomi-type model with negative power in the speed of propagation} and nonlinearity of derivative type. Hence, with minor modifications in our blow-up argument for \eqref{semilinear WE in EdS derivative type} we will prove a blow-up result also for the following model
\begin{align}\label{semilinear Tricomi negative derivative type}
\begin{cases} \partial_t^2u-t^{-2k}\Delta u=|\partial_t u|^p, &  x\in \mathbb{R}^n, \ t>1,\\
u(1,x)=\varepsilon u_0(x), & x\in \mathbb{R}^n, \\ u_t(1,x)=\varepsilon u_1(x), & x\in \mathbb{R}^n,
\end{cases}
\end{align} where $k\in (0,1)$, $p>1$ and $\varepsilon$ is a positive constant describing the size of Cauchy data. 

Recently, the semilinear generalized Tricomi equation with nonlinearity of derivative type, namely, the Cauchy problem
\begin{align}\label{semilinear Tricomi derivative type}
\begin{cases} \partial_t^2u-t^{2\ell}\Delta u=|\partial_t u|^p, &  x\in \mathbb{R}^n, \ t>0,\\
u(0,x)=\varepsilon u_0(x), & x\in \mathbb{R}^n, \\ u_t(0,x)=\varepsilon u_1(x), & x\in \mathbb{R}^n,
\end{cases}
\end{align} where $\ell >0$, $p>1$, $\varepsilon>0$, has been attracting a lot of attention and different approaches have been applied to establish several blow-up results for $p$ in suitable ranges depending on $n,\ell$. Among these approaches we find, on the one hand, comparison arguments based either on the fundamental solution for the operator $\partial_t^2-t^{2\ell}\Delta$ in \cite{LP20} or on the employment of a special positive solution of the corresponding homogeneous equation involving modified Bessel functions of second kind in \cite{HH20Tri}, and, on the other hand, a \textit{modified test function method} in \cite{LS20}.
In particular, the result that we are going to show for \eqref{semilinear Tricomi negative derivative type} is consistent with those for \eqref{semilinear Tricomi derivative type} in \cite{LS20,HH20Tri}, although we shall use a different approach to prove this result. Moreover, we will see that the upper bound for the exponent $p$ in the blow-up result for \eqref{semilinear WE in EdS derivative type} is a shift of the corresponding upper bound for \eqref{semilinear Tricomi negative derivative type}.

\subsection{Main results}  \label{Subsection Main results}
In this section, we state the main blow-up results for the semilinear models in \eqref{semilinear WE in EdS derivative type} and \eqref{semilinear Tricomi negative derivative type}, respectively. 

Since the speed of propagation in both semilinear models \eqref{semilinear WE in EdS derivative type} and \eqref{semilinear Tricomi negative derivative type} is $a_k(t)\doteq t^{-k}$, in the following it will be useful to employ the notation
\begin{align}\label{def phi k}
\phi_k(t)\doteq \frac{t^{1-k}}{1-k},
\end{align} for the primitive of $a_k$ vanishing at $t=0$ and the notation
\begin{align} \label{def A k}
A_k(t)\doteq \phi_k(t)-\phi_k(1) =\int_1^t a_k(\tau) \, \mathrm{d}\tau,
\end{align} for the distance function describing the amplitude of the curved light-cone.

\begin{theorem}[Blow-up result for the semilinear wave equation in EdS spacetime] \label{Thm blow-up EdS} Let $n\geqslant 1$ and $k\in(0,1)$. We assume that  $(u_0,u_1)\in \mathcal{C}^2_0(\mathbb{R}^n)\times  \mathcal{C}^1_0(\mathbb{R}^n)$ are
 nonnegative functions and have supports contained in $B_R\doteq \{x\in\mathbb{R}^n: |x|<R\}$.
 Let us consider an exponent $p$ in the nonlinearity of derivative type such that 
\begin{align}\label{p condition Thm EdS}
1<p\leqslant p_{\mathrm{Gla}}\big((1-k)n+2k+2\big),
\end{align} where the Glassey exponent $p_{\mathrm{Gla}}$ is defined by \eqref{def Glassey exponent}.
 
 Then, there exists $\varepsilon_0=\varepsilon_0(n,p,k,u_0,u_1,R)>0$ such that for any $\varepsilon\in (0,\varepsilon_0]$ if $u\in \mathcal{C}^2([1,T)\times \mathbb{R}^n)$ is a local in time solution to \eqref{semilinear WE in EdS derivative type} such that $\supp u(t,\cdot) \subset B_{R+A_k(t)}$ for any $t\in [1,T)$
  and with lifespan $T=T(\varepsilon)$, then,  $u$ blows up in finite time, that is, $T<\infty$.

 Furthermore, the following upper bound estimate for the lifespan holds
 \begin{align} \label{upper bound lifespan Thm EdS}
T(\varepsilon) \leqslant
\begin{cases} 
C \varepsilon^{-\left(\frac{1}{p-1}-\frac{(1-k)n+2k+1}{2}\right)^{-1}} & \mbox{if} \ \ 1<p<p_{\mathrm{Gla}}\big((1-k)n+2k+2\big), \\
 \exp\big(C \varepsilon^{-(p-1)}\big) & \mbox{if} \ \ p=p_{\mathrm{Gla}}\big((1-k)n+2k+2),
\end{cases}
\end{align} where the positive constant $C$ is independent of $\varepsilon$.
\end{theorem}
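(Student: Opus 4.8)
The plan is to combine the one-dimensional integral representation formula obtained via Yagdjian's transform with a test-function argument in the spirit of Zhou, reducing the whole problem to a single integral/differential inequality in time whose blow-up threshold reproduces exactly the Glassey-type exponent $p_{\mathrm{Gla}}\big((1-k)n+2k+2\big)$.

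First I would introduce the positive eigenfunction of the Laplacian $\psi(x)=\int_{S^{n-1}}e^{x\cdot\omega}\,\mathrm d\omega$, which satisfies $\Delta\psi=\psi$, is positive, and obeys $\psi(x)\sim c_n|x|^{-(n-1)/2}e^{|x|}$ as $|x|\to\infty$. Multiplying \eqref{semilinear WE in EdS derivative type} by $\psi$ and integrating over $\mathbb R^n$, the compact support of $u(t,\cdot)$ allows me to transfer the derivatives onto $\psi$ in the elliptic term, so that with $U(t)\doteq\int_{\mathbb R^n}u(t,x)\psi(x)\,\mathrm dx$ I obtain the reduced equation
\[
U''(t)+\tfrac{2}{t}U'(t)-t^{-2k}U(t)=\int_{\mathbb R^n}|\partial_t u(t,x)|^p\,\psi(x)\,\mathrm dx\geq 0.
\]
The homogeneous part of this ODE is precisely the one encoded by the $n=1$ representation formula: after the change of variables $\phi=\phi_k(t)$ it becomes a modified Bessel equation $\partial_\phi^2U+\tfrac{2-k}{1-k}\,\phi^{-1}\partial_\phi U-U=0$, and Yagdjian's kernel then furnishes an explicit, \emph{positive} variation-of-parameters kernel for the inhomogeneous problem.

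Next, using the nonnegativity of the data together with the positivity of that kernel, I would extract a linear lower bound driven by the growing (modified Bessel $I$) mode, whose size is, up to a power of $t$, of order $e^{\phi_k(t)}$. For the nonlinear feedback I set $V(t)\doteq U'(t)=\int_{\mathbb R^n}\partial_t u\,\psi\,\mathrm dx\geq 0$ and close the loop through Hölder's inequality and the support condition $\supp u(t,\cdot)\subset B_{R+A_k(t)}$, which yield
\[
\int_{\mathbb R^n}|\partial_t u|^p\psi\,\mathrm dx\;\geq\;\frac{V(t)^p}{\Big(\int_{B_{R+A_k(t)}}\psi\,\mathrm dx\Big)^{p-1}},\qquad \int_{B_{R+A_k(t)}}\psi\,\mathrm dx\sim \phi_k(t)^{(n-1)/2}e^{\phi_k(t)}.
\]
Inserting this into the Duhamel form of the reduced equation produces a closed integral inequality for $V$, in which the factor $e^{p\phi_k(t)}$ carried by $V^p$ is reduced by the factor $e^{(p-1)\phi_k(t)}$ in the support-volume denominator to a single $e^{\phi_k(t)}$, matching the exponential growth of the linear mode on the left-hand side; what survives is a purely polynomial-weighted inequality of the schematic form $V(t)\geq c\varepsilon+C\int_1^t s^{-\rho}V(s)^p\,\mathrm ds$. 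Iterating this inequality, and in the critical case $p=p_{\mathrm{Gla}}\big((1-k)n+2k+2\big)$ refining it by a slicing procedure, forces $V$, and hence $u$, to blow up in finite time, and the balance of polynomial powers yields the two lifespan estimates in \eqref{upper bound lifespan Thm EdS}.

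The hard part will be the sharp bookkeeping in this last step. The threshold $(1-k)n+2k+2$ emerges only after correctly combining three competing polynomial contributions: the power-of-$t$ prefactor of the growing Bessel mode, the correction $\phi_k(t)^{(n-1)/2}$ in the support volume, and the scale-invariant damping, which through the Bessel order $\tfrac{2-k}{1-k}$ shifts the effective dimension upward by $2$ relative to the undamped Tricomi model \eqref{semilinear Tricomi negative derivative type}. Propagating these weights through the \emph{derivative-type} Hölder step — where one must control $\int|\partial_t u|^p\psi$ by $V=\int\partial_t u\,\psi$ rather than by $\int u\,\psi$ — without losing powers is exactly what pins down the critical balance $\tfrac1{p-1}=\tfrac{(1-k)n+2k+1}{2}$ and the sub-critical lifespan $T(\varepsilon)\lesssim\varepsilon^{-\left(\frac1{p-1}-\frac{(1-k)n+2k+1}{2}\right)^{-1}}$.
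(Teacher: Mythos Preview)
Your approach is genuinely different from the paper's, and the overall strategy is viable, but the sketch conflates two distinct reductions and leaves the key closure step underspecified.

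The paper does \emph{not} multiply by the Yordanov--Zhang eigenfunction $\psi$. Instead it integrates $u$ over the last $n-1$ spatial variables to obtain a function $\mathcal U(t,z)$ of one space variable, applies the $1{+}1$-dimensional representation formula from Corollary~\ref{Cor representation formula 1d case EdS} directly to that PDE, and then follows Zhou's method on the characteristic line $A_k(t)-z=R$: the kernels $K_0,K_1,E(\cdot;2,0,k)$ are estimated from below on this characteristic, H\"older in the $w\in\mathbb R^{n-1}$ variables converts $\int_{\mathbb R^{n-1}}|\partial_t u|^p\,\mathrm dw$ into $((R+A_k(b))^2-y^2)^{-\frac{n-1}{2}(p-1)}|\partial_t\mathcal U|^p$, and a one-variable comparison argument for $U(z)=(R+z)^{\gamma+\frac1{1-k}}\mathcal U(A_k^{-1}(z+R),z)$ closes the loop. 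No exponentials ever appear; all weights are polynomial from the start, and the critical balance $\tfrac1{p-1}=\tfrac{(1-k)n+2k+1}{2}$ drops out of the kernel exponent $\gamma=-\tfrac{k}{2(1-k)}$ together with the $(n-1)$-dimensional support volume. Your route---reduce to an ODE for $U(t)=\int u\psi$, identify the growing Bessel mode, and cancel $e^{p\phi_k}$ against $e^{(p-1)\phi_k}$---is closer in spirit to \cite{HH20Tri,LS20} than to the present paper, and the paper's Yagdjian-type formula is for the $1{+}1$ PDE, not for your scalar ODE, so invoking it there is a misattribution.

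The substantive gap in your sketch is the passage ``Inserting this into the Duhamel form \ldots\ produces a closed integral inequality for $V$.'' Duhamel gives you $U$, not $V=U'$; differentiating brings in $\partial_t G(t,s)$, whose sign and sharp polynomial weight you have not controlled. The naive route via $(t^2U')'=t^{2-2k}U+t^2F\geq t^2F$ loses a full $t^{-2}$ and will not recover the correct exponent. To make your strategy work you must either (i) build the time decay into the test function from the outset, i.e.\ work with a functional of the form $\int \partial_t u\cdot \lambda(t)\psi(x)\,\mathrm dx$ where $\lambda$ is the decaying $K$-Bessel mode of the adjoint ODE, so that the linear part is exactly annihilated and only the polynomial weights survive, or (ii) carry out the sharp lower bound on $\partial_t G(t,s)$ using the explicit Bessel representation. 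Either is doable and known, but neither is the ``Yagdjian kernel'' of this paper, and without one of them your inequality $V(t)\geq c\varepsilon+C\int_1^t s^{-\rho}V(s)^p\,\mathrm ds$ is asserted rather than proved.
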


\begin{theorem}[Blow-up result for the semilinear Tricomi-type equation] \label{Thm blow-up Tricomi negative} Let $n\geqslant 1$ and $k\in(0,1)$. We assume that  $u_0=0$ and  that $u_1\in  \mathcal{C}^1_0(\mathbb{R}^n)$ is
 nonnegative function with support contained in $B_R$ for some $R>0$.
 Let us consider an exponent $p$ in the nonlinearity of derivative type such that 
\begin{align}\label{p condition Tricomi negative}
1<p\leqslant p_{\mathrm{Gla}}\big((1-k)n+2k\big),
\end{align} where the Glassey exponent $p_{\mathrm{Gla}}$ is defined by \eqref{def Glassey exponent}.
 
 Then, there exists $\varepsilon_0=\varepsilon_0(n,p,k,u_1,R)>0$ such that for any $\varepsilon\in (0,\varepsilon_0]$ if $u\in \mathcal{C}^2([1,T)\times \mathbb{R}^n)$ is a local in time solution to \eqref{semilinear Tricomi negative derivative type} such that $\supp u(t,\cdot) \subset B_{R+A_k(t)}$ for any $t\in [1,T)$
  and with lifespan $T=T(\varepsilon)$, then,  $u$ blows up in finite time, that is, $T<\infty$.

 Furthermore, the following upper bound estimate for the lifespan holds
 \begin{align} \label{upper bound lifespan Thm Tricomi negative}
T(\varepsilon) \leqslant
\begin{cases} 
C \varepsilon^{-\left(\frac{1}{p-1}-\frac{(1-k)n+2k-1}{2}\right)^{-1}} & \mbox{if} \ \ 1<p<p_{\mathrm{Gla}}\big((1-k)n+2k\big), \\
 \exp\big(C \varepsilon^{-(p-1)}\big) & \mbox{if} \ \ p=p_{\mathrm{Gla}}\big((1-k)n+2k),
\end{cases}
\end{align} where the positive constant $C$ is independent of $\varepsilon$.
\end{theorem}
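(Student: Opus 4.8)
The plan is to adapt Zhou's functional method to the variable-speed operator by testing the equation against a positive solution of the companion homogeneous linear equation, and then to distill a single ordinary differential inequality whose blow-up reproduces the shifted Glassey exponent $p_{\mathrm{Gla}}\big((1-k)n+2k\big)$. Throughout I write $d\doteq (1-k)n+2k$ for the ``effective dimension''.

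First I would set up the test function. Let $\varphi(x)\doteq\int_{\mathbb{S}^{n-1}}e^{x\cdot\omega}\,\mathrm{d}\omega$, which satisfies $\Delta\varphi=\varphi$, is strictly positive, and obeys $\varphi(x)\sim c_n|x|^{-(n-1)/2}e^{|x|}$ as $|x|\to\infty$. Separating variables, I look for a time profile $\beta=\beta(t)$ so that $\Psi(t,x)\doteq\beta(t)\varphi(x)$ solves the homogeneous equation $\partial_t^2\Psi-t^{-2k}\Delta\Psi=0$; this forces the companion ODE $\beta''(t)=t^{-2k}\beta(t)$, which is exactly the profile appearing in the kernel of the one-dimensional integral representation formula (a modified Bessel equation in the variable $\phi_k(t)$ from \eqref{def phi k}). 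I take the decaying solution $\beta>0$, with asymptotics $\beta(t)\sim c\,t^{k/2}e^{-\phi_k(t)}$ and $\beta'(t)<0$. The point of choosing the decaying profile is that it cancels the exponential growth of $\varphi$ along the light cone.

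Next I would introduce the Wronskian-type functional $F(t)\doteq\int_{\mathbb{R}^n}\big(u_t\,\Psi-u\,\Psi_t\big)\,\mathrm{d}x$. Multiplying \eqref{semilinear Tricomi negative derivative type} by $\Psi$, integrating by parts (all spatial boundary terms vanish since $\supp u(t,\cdot)\subset B_{R+A_k(t)}$), and using $\partial_t^2\Psi=t^{-2k}\Delta\Psi$, every linear contribution cancels and I obtain the clean identity $F'(t)=\int_{\mathbb{R}^n}|u_t|^p\,\Psi\,\mathrm{d}x\geq 0$. Hence $F$ is nondecreasing, and because $u_0=0$ we get $F(1)=\beta(1)\,\varepsilon\int_{\mathbb{R}^n}u_1\,\varphi\,\mathrm{d}x>0$, so $F(t)\geq c_0\varepsilon>0$ for all $t$. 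I would then record the auxiliary positivity $\int u\,\varphi\,\mathrm{d}x>0$ and $\int u_t\,\varphi\,\mathrm{d}x>0$ for $t>1$, which follow from the positivity built into the one-dimensional representation formula together with a comparison argument for $\big(\int u\,\varphi\big)''\geq t^{-2k}\int u\,\varphi$ under the given data. Using these, one controls $F$ by its $u_t$-part, namely $\beta(t)\int u_t\,\varphi\,\mathrm{d}x\geq\tfrac12F(t)$. Applying Hölder's inequality with the weight $\Psi$ gives $\int|u_t|^p\Psi\,\mathrm{d}x\geq\big(\int u_t\Psi\,\mathrm{d}x\big)^p\big(\int_{B_{R+A_k(t)}}\Psi\,\mathrm{d}x\big)^{1-p}$, and the asymptotics of $\varphi$ together with $A_k(t)\sim\phi_k(t)$ make the exponentials $e^{\pm\phi_k(t)}$ cancel, leaving $\int_{B_{R+A_k(t)}}\Psi\,\mathrm{d}x\sim t^{(d-1)/2}$. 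Combining, I arrive at the closed inequality $F'(t)\geq C\,t^{-\frac{d-1}{2}(p-1)}F(t)^p$.

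Integrating $F'/F^p\geq C\,t^{-\frac{d-1}{2}(p-1)}$ and using $F^{1-p}\leq F(1)^{1-p}\sim\varepsilon^{1-p}$ forces a finite upper bound on $t$ precisely when the power of $t$ is at least $-1$, that is when $p\leq 1+\tfrac{2}{d-1}=p_{\mathrm{Gla}}(d)$: the subcritical integral $\sim t^{\,1-\frac{d-1}{2}(p-1)}$ yields the polynomial lifespan bound and the critical integral $\sim\log t$ the exponential one, matching \eqref{upper bound lifespan Thm Tricomi negative}. I expect the main obstacle to be the estimate $\beta\int u_t\,\varphi\gtrsim F$ controlling the full functional by its $u_t$-component: both summands defining $F$ are of the same exponential order $e^{\pm\phi_k(t)}$, so this step is delicate and genuinely uses the sign information provided by the representation formula (and is where the hypothesis $u_0=0$ is consumed). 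Finally, Theorem \ref{Thm blow-up EdS} follows by the same scheme after replacing the companion ODE by $\gamma''-2t^{-1}\gamma'+2t^{-2}\gamma-t^{-2k}\gamma=0$ coming from the formal adjoint of the damped operator; the damping only shifts the prefactor of the time profile, raising the effective dimension from $d$ to $d+2$ and producing the exponent $p_{\mathrm{Gla}}\big((1-k)n+2k+2\big)$.
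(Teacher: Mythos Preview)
Your approach is valid but genuinely different from the paper's. The paper does \emph{not} use a multiplier/test-function argument; instead it reduces to a one-dimensional problem by integrating $u$ over the last $n-1$ spatial variables, applies the explicit integral representation formula of Corollary~\ref{Cor representation formula 1d case Tricomi neg} to the resulting function $\mathcal{U}(t,z)$, and then restricts to the characteristic $A_k(t)-z=R$. There the kernel $E(\cdot;0,0,k)$ is estimated from below (using that $\gamma(0,0,k)<0$ and that the hypergeometric factor is bounded), which after H\"older's inequality in the transverse variables yields an integral inequality for $\widetilde U(z)\doteq(z+R)^{\gamma}\mathcal{U}(A_k^{-1}(z+R),z)$ of the form $\widetilde U(z)\gtrsim \varepsilon + \int_R^z (R+y)^{-\sigma}|\widetilde U(y)|^p\,\mathrm{d}y$, and the critical relation $\sigma=1\Leftrightarrow p=p_{\mathrm{Gla}}((1-k)n+2k)$ produces \eqref{upper bound lifespan Thm Tricomi negative} by a comparison argument. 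So the paper exploits the fundamental solution directly, whereas you take the route of \cite{HH20Tri,LS20} (which the paper explicitly mentions as an alternative it is \emph{not} following).

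Your scheme is correct in outline and gives the same exponent and lifespan bounds. The one point that deserves more care than you give it is the inequality $\beta(t)\int u_t\varphi\,\mathrm{d}x\gtrsim F(t)$. Writing $G(t)=\int u\varphi\,\mathrm{d}x$, one has $F=\beta G'-\beta'G$ with both summands positive, and what you need is $G'/G\geqslant c\,(-\beta'/\beta)$. This does not follow from mere sign information; a clean way is to introduce the Wronskian with the \emph{growing} profile $\alpha$ (the $I_\nu$-type solution of $\alpha''=t^{-2k}\alpha$): since $(G'\alpha-G\alpha')'=(G''-t^{-2k}G)\alpha\geqslant 0$ and $G(1)=0$, $G'(1)>0$, one gets $G'/G>\alpha'/\alpha$ for $t>1$, and then $\alpha'/\alpha$ and $-\beta'/\beta$ are comparable on $[1,\infty)$ (both tend to $t^{-k}$). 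This closes the gap. The trade-off is that the paper's approach makes essential use of Section~\ref{Section n=1} and hence ties the blow-up proof to the explicit kernel, while your multiplier argument is lighter and more portable (and indeed extends to the damped case by passing to the adjoint profile, as you note), at the cost of the extra Wronskian comparison just described.
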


\begin{remark} The upper bound for $p$ in the blow-up range \eqref{p condition Thm EdS} is a shift of magnitude $2$ of the Glassey exponent that appears as upper bound in \eqref{p condition Tricomi negative}. This kind of phenomenon has already been observed in the semilinear model with power nonlinearity in \cite{TW20,Pal20EdSmu}.
\end{remark}

\begin{remark} The upper bound $p_{\mathrm{Gla}}\big((1-k)n+2k\big)$ in Theorem \ref{Thm blow-up Tricomi negative} is consistent with the upper bound for the semilinear generalized Tricomi with nonlinearity of derivative type (when the power in the speed of propagation is positive and the Cauchy data are assumed at the initial time $t=0$) see e.g. \cite{LS20,HH20Tri}.
\end{remark}

\begin{remark} In Theorem \ref{Thm blow-up Tricomi negative}  we required a trivial first Cauchy data ($u_0=0$). This assumption is due to the fact that, in general, the kernel function $K_0\big(t,x;y;0,0,k\big)$,  whose definition will be provided later, see \eqref{def K0(t,x;y)}, is not a nonnegative function.
\end{remark}

\section{Integral representation formula} \label{Section n=1}

In the series of papers \cite{Yag04,Yag06,Yag07,YagGal08,Yag09,YagGal09,Yag10,Yag12,Yag13,
Yag15,Yag15MN,Pal19RF,Yag20}, several integral representation formulae for solutions to Cauchy problems associated with linear hyperbolic equations with variable coefficients have been derived and applied both to study the necessity and the sufficiency part concerning the problem of  the global (in time) existence of solutions. The general scheme to determine an integral representation in the above cited literature is the following: the desired formula is obtained by considering the composition of two operators. On the one hand, the external operator is an integral transformation, whose kernel is determined by the time-dependent coefficients and/or by the lower-order terms in the associated partial differential operator. On the other hand, the internal operator is a \emph{solution operator for a family of parameter dependent Cauchy problems} (this step is often called, in the above quoted literature, a revised Duhamel's principle). In the special case in which the considered hyperbolic model is a wave equation with time-dependent speed of propagation and lower-order terms, the above mentioned solution operator associates with a given function the solution to the Cauchy problem for the classical free wave equation with the given function as first initial data and with vanishing second initial data.

In the present section, we are going to use Yagjian's Integral Transform approach in order to determine an explicit integral representation formula for the linear Cauchy problem
\begin{align}\label{linear inhomog CP 1d}
\begin{cases} \partial_t^2 u- t^{-2k} \partial_x^2 u +\mu \, t^{-1}\partial_t u+\nu^2 \, t^{-2} u=g(t,x), &  x\in \mathbb{R}, \ t>1,\\
u(1,x)=u_0(x), & x\in \mathbb{R}, \\ \partial_t u(1,x)=u_1(x), & x\in \mathbb{R},
\end{cases}
\end{align} where $\mu,\nu^2$ are nonnegative real parameters and $k\in(0,1)$. 
Of course, \eqref{linear inhomog CP 1d} represents a more general model than the ones which are necessary to prove the blow-up results for \eqref{semilinear WE in EdS derivative type} and \eqref{semilinear Tricomi negative derivative type} that are stated in Section \ref{Subsection Main results}. Nonetheless, thanks to the study of the representation formula for \eqref{linear inhomog CP 1d} we will be able to derive as special cases the representation formulae for the solutions to the corresponding linear wave equation in the Einstein-de Sitter spacetime and linear Tricomi-type equation with negative power in the speed of propagation. Although it might seem counterintuitive, the presence of the mass term $\nu^2 \, t^{-2} u$ simplifies the description of the symmetry of the second-order operator $$\mathcal{L}_{k,\mu,\nu^2}\doteq \partial_t^2 - t^{-2k} \partial_x^2+\mu \, t^{-1}\partial_t +\nu^2 \, t^{-2}$$ with respect to the parameter $\mu$. More in detail, the quantity
\begin{align}
\delta= \delta(\mu,\nu^2) \doteq (\mu-1)^2-4 \nu^2, \label{def delta}
\end{align} has a crucial role in determining some properties of the fundamental solution of $\mathcal{L}_{k,\mu,\nu^2}$. In the special case $k=0$ (the so-called wave operator with scale-invariant damping and mass), it is known in the literature that the value of $\delta$ affects not only the fundamental solution of $\mathcal{L}_{0,\mu,\nu^2}$ but also the critical exponents in the treatment of semilinear Cauchy problem associated with $\mathcal{L}_{0,\mu,\nu^2}$ with power nonlinearity \cite{NPR16,PalRei18,PalThesis,Pal18odd,Pal18even,PT18,DabbPal18,Dabb20}, nonlinearity of derivative type \cite{PalTu19,HH20der}, and combined nonlinearity \cite{HH20,HH20i,HH20m}.

We shall see that even in the case $k\in(0,1)$ some properties of the fundamental solution of $\mathcal{L}_{k,\mu,\nu^2}$ depend strongly on the value of $\delta$. Of course, in the general case, we will find an interplay of $\delta$ and $k$ in the description of the fundamental solution.

We point out that, even though in this section we will focus on the case $n=1$, analogously to what is done in \cite{Yag04,Yag07,YagGal08,YagGal09,Pal19RF} it is possible to extend the integral representation even to the higher dimensional case by using the spherical means and the method of descent.

Let us state now the representation formula for the solution to \eqref{linear inhomog CP 1d} in space dimension 1.

\begin{theorem} \label{Thm representation formula 1d case}
Let $n=1$, $k\in(0,1)$ and let $\mu,\nu^2$ be nonnegative constants. We assume $u_0\in \mathcal{C}^2(\mathbb{R})$, $u_1\in \mathcal{C}^1(\mathbb{R})$ and $g\in \mathcal{C}^{0,1}_{t,x}([1,\infty)\times \mathbb{R})$. Then, a representation formula for the solution $u$ to \eqref{linear inhomog CP 1d} is given by
\begin{align}
u(t,x) &= \frac{1}{2}t^{\frac{k-\mu}{2}}\big(u_0(x+A_k(t))+u_0(x-A_k(t))\big)+\int_{x-A_k(t)}^{x+A_k(t)} u_0(y) K_0\big(t,x;y;\mu,\nu^2,k\big)\, \mathrm{d}y \notag \\ & \quad +\int_{x-A_k(t)}^{x+A_k(t)} u_1(y)K_1\big(t,x;y;\mu,\nu^2,k\big)\, \mathrm{d}y + \int_0^t \int_{x-\phi_k(t)+\phi_k(b)}^{x+\phi_k(t)-\phi_k(b)} g(b,y) E\big(t,x;b,y;\mu,\nu^2,k\big) \, \mathrm{d}y\, \mathrm{d}b. \label{representation formula 1d case}
\end{align} Here the kernel function $E$ is defined by
\begin{align}
E\big(t,x;b,y;\mu,\nu^2,k\big) & \doteq c\,  t^{-\frac{\mu}{2}+\frac{1-\sqrt{\delta}}{2}} b^{\frac{\mu}{2}+\frac{1-\sqrt{\delta}}{2}} \left((\phi_k(t)+\phi_k(b))^2-(y-x)^2\right)^{-\gamma} \notag \\ 
& \qquad \times \mathsf{F}\left(\gamma,\gamma;1; \frac{(\phi_k(t)-\phi_k(b))^2-(y-x)^2}{(\phi_k(t)+\phi_k(b))^2-(y-x)^2} \right),\label{def E(t,x;b,y)}
\end{align} where
\begin{align}
 \ c &=c(\mu, \nu^2,k) \doteq 2^{-\frac{\sqrt{\delta}}{1-k}}(1-k)^{-1+\frac{\sqrt{\delta}}{1-k}} \quad \mbox{and} \quad \gamma = \gamma(\mu,\nu^2,k)\doteq \frac{1}{2}-\frac{\sqrt{\delta}}{2(1-k)}, \label{def gamma} 
\end{align}  and $\mathsf{F}(\alpha_1,\alpha_2;\beta; z)$ denotes the Gauss hypergeometric function, while the kernel functions $K_0,K_1$ appearing in the integral terms involving the Cauchy data are given by
\begin{align}
K_0\big(t,x;y;\mu,\nu^2,k\big) & \doteq \mu E\big(t,x;1,y;\mu,\nu^2,k\big)-\frac{\partial}{\partial b}\, E\big(t,x;b,y;\mu,\nu^2,k\big) \Big|_{b=1}, \label{def K0(t,x;y)}\\
K_1\big(t,x;y;\mu,\nu^2,k\big) & \doteq  E\big(t,x;1,y;\mu,\nu^2,k\big). \label{def K1(t,x;y)}
\end{align} 
\end{theorem}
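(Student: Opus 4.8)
The plan is to follow Yagdjian's integral transform method, whose two-operator structure is recalled just before the statement: an \emph{internal} solution operator for the free one-dimensional wave equation (d'Alembert's formula), composed with an \emph{external} integral transform whose kernel will turn out to be the Gauss hypergeometric function appearing in \eqref{def E(t,x;b,y)}. Before invoking it, I would reduce \eqref{linear inhomog CP 1d} to a normal form by two successive substitutions. Setting $u(t,x)=t^{-\mu/2}v(t,x)$ annihilates the damping term $\mu\,t^{-1}\partial_t u$ and, by a direct computation, transforms the equation into
\[
\partial_t^2 v - t^{-2k}\partial_x^2 v + \tfrac{1-\delta}{4}\,t^{-2}v = t^{\mu/2}g,
\]
where $\delta$ is precisely the discriminant \eqref{def delta}; this already accounts for the exponent $-\mu/2$ in $E$ and clarifies why $\delta$ governs the kernel. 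I would then pass to the new time $s=\phi_k(t)$ from \eqref{def phi k}, under which $\partial_t=t^{-k}\partial_s$, so that the principal part becomes the free d'Alembertian $\partial_s^2-\partial_x^2$ at the price of a singular drift $-\tfrac{k}{1-k}s^{-1}\partial_s$ and a singular potential proportional to $s^{-2}$. The outcome is an Euler--Poisson--Darboux / Tricomi-type equation, for which $\phi_k(t)\pm\phi_k(b)$ and $y-x$ are the natural characteristic coordinates organizing $E$.

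The core step is the construction of the kernel. Following the revised Duhamel principle, I would look for the solution as an integral superposition of free-wave propagators: for a datum $\varphi$ let $W[\varphi](r,x)=\tfrac12(\varphi(x+r)+\varphi(x-r))$ solve $\partial_r^2W=\partial_x^2W$ with $W(0,\cdot)=\varphi$ and $\partial_rW(0,\cdot)=0$, and seek the solution in the form $\int W[\varphi](r,x)\,\mathcal K(s,r)\,\mathrm dr$ plus a transport (endpoint) term. Substituting into the Euler--Poisson--Darboux-type equation, using $\partial_r^2W=\partial_x^2W$ to trade $x$-derivatives for $r$-derivatives and integrating by parts in $r$, collapses the PDE into a single ordinary differential equation for $\mathcal K$ in the self-similar variable
\[
z=\frac{(\phi_k(t)-\phi_k(b))^2-(y-x)^2}{(\phi_k(t)+\phi_k(b))^2-(y-x)^2}.
\]
This is the Gauss hypergeometric equation; requiring the solution to be regular at the light cone $z=0$ (where $\mathsf F(\gamma,\gamma;1;0)=1$), together with the indicial equation of the $s^{-2}$-potential — whose discriminant equals $\delta/(1-k)^2$ — forces the parameters $(\gamma,\gamma;1)$ with $\gamma=\tfrac12-\tfrac{\sqrt{\delta}}{2(1-k)}$ as in \eqref{def gamma}, and explains the appearance of $\sqrt{\delta}$. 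Undoing the two substitutions restores the weights $t^{-\mu/2+(1-\sqrt{\delta})/2}b^{\mu/2+(1-\sqrt{\delta})/2}$ and the normalizing constant $c$ in \eqref{def E(t,x;b,y)}, the prefactor $t^{(k-\mu)/2}$ of the half-sum term combining the damping weight $t^{-\mu/2}$ with the weight $t^{k/2}$ produced by the time change.

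It then remains to assemble the four terms of \eqref{representation formula 1d case}. The source term is Duhamel's formula with $E$ playing the role of the forward fundamental solution, integrated over the backward characteristic region; the propagator of the second datum is $K_1=E|_{b=1}$, which is \eqref{def K1(t,x;y)}, while the propagator of the first datum is $K_0$, which combines $E|_{b=1}$ with $\partial_bE|_{b=1}$ exactly as in \eqref{def K0(t,x;y)}, the weight $\mu$ being the trace left by the damping term when the first datum is propagated. The step I expect to be the main obstacle is the rigorous verification of the Cauchy conditions $u(1,\cdot)=u_0$ and $\partial_t u(1,\cdot)=u_1$: as $t\to1$ one has $A_k(t)\to0$ and $z\to0$, so I would need to show that the two $u_0$-contributions coalesce into $u_0$ while the $u_1$-integral vanishes to first order and its $t$-derivative reproduces $u_1$, which is hypergeometric bookkeeping relying on $\mathsf F(\gamma,\gamma;1;0)=1$, on the derivative identity $\tfrac{\mathrm d}{\mathrm dz}\mathsf F(\gamma,\gamma;1;z)=\gamma^2\,\mathsf F(\gamma+1,\gamma+1;2;z)$, and on contiguous relations for $\mathsf F$. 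The genuinely delicate point is the behaviour at the lower endpoint of the $b$-integral, where $\phi_k(b)\to0$, equivalently $z\to1$, i.e. at the singularity of $\mathsf F(\gamma,\gamma;1;\cdot)$ at $1$; controlling its integrability there, and checking that $E$ indeed solves $\mathcal L_{k,\mu,\nu^2}E=0$ in $(t,x)$ by reduction to the hypergeometric equation, is where the bulk of the computation lies.
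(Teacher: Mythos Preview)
Your plan is sound but takes a genuinely different route from the paper's proof. The paper performs a \emph{single} change of variables $\tau=t^{1-k}-1$, $z=(1-k)x$, which converts \eqref{linear inhomog CP 1d} directly into the scale-invariant wave equation with damping and mass (the $k=0$ model) with shifted parameters $\widetilde{\mu}=\tfrac{\mu-k}{1-k}$, $\widetilde{\nu}^2=\tfrac{\nu^2}{(1-k)^2}$, satisfying $\widetilde{\delta}=(\widetilde{\mu}-1)^2-4\widetilde{\nu}^2=\delta/(1-k)^2$; it then simply \emph{quotes} the representation formula for that model from \cite[Theorem~1.1]{Pal19RF} and spends the rest of the proof mechanically transforming each of the four resulting terms back through the change of variables to recover \eqref{representation formula 1d case}. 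In particular no gauge transform $u=t^{-\mu/2}v$ is used, no hypergeometric ODE is derived, and no Cauchy conditions are verified by hand --- all of that is outsourced to the cited reference.

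Your approach instead redoes the Yagdjian machinery from scratch: remove the damping by the gauge $u=t^{-\mu/2}v$, pass to $s=\phi_k(t)$, reduce the kernel equation to Gauss' hypergeometric ODE in the cross-ratio variable, and then check the initial conditions and the $z\to 1$ integrability. This is essentially the argument that underlies \cite{Pal19RF} itself, so both routes land on the same kernel; the paper's proof is far shorter because it treats the $k=0$ case as a black box, while yours is self-contained and makes the provenance of $\gamma$ and the $\sqrt{\delta}$-weights completely transparent. The endpoint and initial-data bookkeeping you flag as the main obstacles are exactly the technical heart of the from-scratch route and are real work; the paper simply avoids them by citation.
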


%%%%%%%%%%%%%%%%%%%%%%%%%%%%%%%%
%%%%%%%%%%%%%%%%%%%%%%%%%%%%%%%%%

\begin{proof}
We are going to prove the representation formula in \eqref{representation formula 1d case} by means of a suitable change of variables that transforms \eqref{linear inhomog CP 1d} in a linear wave equation with scale-invariant damping and mass terms and allows us to employ a result from \cite{Pal19RF}. More specifically,
 we perform the transformation
 \begin{align}\label{change of variables y, tau}
 \tau \doteq t^{1-k}-1, \quad z\doteq (1-k)x.
 \end{align} Setting $v(\tau,z)=u(t,x)$, by straightforward computations it follows that $u$ solves \eqref{linear inhomog CP 1d} if and only if $v$ is a solution to 
 \begin{align*}
 \begin{cases} 
 \partial_\tau^2 v- \partial_z^2 v +\frac{\mu-k}{1-k} (1+\tau)^{-1} \partial_\tau v+\frac{\nu^2}{(1-k)^2} (1+\tau)^{-2} v=f(\tau,z), &  z\in \mathbb{R}, \ \tau>0,\\
v(\tau=0,z)=u_0\big(\frac{z}{1-k}\big), & z\in \mathbb{R}, \\ \partial_\tau v(\tau=0,z)=\frac{1}{1-k} u_1\big(\frac{z}{1-k}\big), & z\in \mathbb{R},
\end{cases}
 \end{align*} where $f(\tau,z)\doteq (1-k)^{-2}(1+\tau)^{\frac{2k}{1-k}}g\big((1+\tau)^{\frac{1}{1-k}},\frac{z}{1-k}\big)$. %Putting $\widetilde{\mu}\doteq \frac{\mu-k}{1-k}$ and $\widetilde{\nu}\doteq \frac{\nu}{1-k}$, 
 According to \cite[Theorem 1.1]{Pal19RF}, we can represent $v$ in the following way 
 \begin{align}\label{v sum vj}
 v=\sum_{j=0}^3 v_j,
\end{align} 
  where the addends $\{v_j\}_{j\in\{0,1,2,3\}}$ are given by 
 \begin{align*}
 v_0(\tau,z) &\doteq \tfrac{1}{2} (1+\tau)^{-\tfrac{\widetilde{\mu}}{2}}\big(u_0\big(\tfrac{z+\tau}{1-k}\big)+u_0\big(\tfrac{z-\tau}{1-k}\big)\big)  ,\\
 v_1(\tau,z) &\doteq 2^{-\sqrt{\widetilde{\delta}\hphantom{l}}} \int_{z-\tau}^{z+\tau} u_0\big(\tfrac{\widetilde{y}}{1-k}\big) \widetilde{K}_0\big(\tau,z;\widetilde{y};\widetilde{\mu},\widetilde{\nu}^2\big)\, \mathrm{d}\widetilde{y},\\
 v_2(\tau,z) &\doteq  2^{-\sqrt{\widetilde{\delta}\hphantom{l}}} \int_{z-\tau}^{z+\tau} \Big(\tfrac{1}{1-k}u_1\big(\tfrac{\widetilde{y}}{1-k}\big)+\tfrac{\mu-k}{1-k} u_0\big(\tfrac{\widetilde{y}}{1-k}\big) \Big)\widetilde{K}_1\big(\tau,z;\widetilde{y};\widetilde{\mu},\widetilde{\nu}^2\big)\,\mathrm{d}\widetilde{y} ,\\
 v_3(\tau,z) &\doteq  2^{-\sqrt{\widetilde{\delta}\hphantom{l}}}  \int_0^\tau \int_{z-\tau+\widetilde{b}}^{z+\tau-\widetilde{b}} f(\widetilde{b},\widetilde{y}) \widetilde{E}\big(\tau,z;\widetilde{b},\widetilde{y};\widetilde{\mu},\widetilde{\nu}^2\big)\, \mathrm{d}\widetilde{y} \, \mathrm{d}\widetilde{b} ,
 \end{align*} and the kernel functions are given by
 \begin{align*}
\widetilde{E}\big(\tau,z;\widetilde{b},\widetilde{y};\widetilde{\mu},\widetilde{\nu}^2\big) & \doteq (1+\tau)^{-\tfrac{\widetilde{\mu}}{2}+\tfrac{1-\sqrt{\widetilde{\delta}\hphantom{l}}}{2}}\big(1+\widetilde{b}\big)^{\tfrac{\widetilde{\mu}}{2}+\tfrac{1-\sqrt{\widetilde{\delta}\hphantom{l}}}{2}} \big((\tau+\widetilde{b}+2)^2-(\widetilde{y}-z)^2\big)^{\tfrac{\sqrt{\widetilde{\delta}\hphantom{l}}-1}{2}} \\
& \qquad \times \mathsf{F}\left(\tfrac{1-\sqrt{\widetilde{\delta}\hphantom{l}}}{2}, \tfrac{1-\sqrt{\widetilde{\delta}\hphantom{l}}}{2};1;\frac{(\tau-\widetilde{b})^2-(\widetilde{y}-z)^2}{(\tau+\widetilde{b}+2)^2-(\widetilde{y}-z)^2}\right), \\
\widetilde{K}_0\big(\tau,z;\widetilde{y};\widetilde{\mu},\widetilde{\nu}^2\big)&\doteq -\frac{\partial}{\partial \widetilde{b}} \widetilde{E}\big(\tau,z;\widetilde{b},\widetilde{y};\widetilde{\mu},\widetilde{\nu}^2\big) \Big|_{\widetilde{b}=0} ,\\
\widetilde{K}_1\big(\tau,z;\widetilde{y};\widetilde{\mu},\widetilde{\nu}^2\big) &\doteq \widetilde{E}\big(\tau,z;0,\widetilde{y};\widetilde{\mu},\widetilde{\nu}^2\big) ,
 \end{align*} where $\mathsf{F}(\alpha_1,\alpha_2;\beta;z)$ is the Gauss hypergeometric function and  $$\widetilde{\mu}\doteq \frac{\mu-k}{1-k}, \quad \widetilde{\nu}\doteq \frac{\nu}{1-k}, \quad  \widetilde{\delta}= \widetilde{\delta}(\mu,\nu^2,k)\doteq(\widetilde{\mu}-1)^2-4\widetilde{\nu}^2 = \frac{ \delta(\mu,\nu^2)}{(1-k)^{2}}.$$
In order to show the validity of \eqref{representation formula 1d case}, we will transform back each term in \eqref{v sum vj}  through \eqref{change of variables y, tau}.

Let us begin with the function $v_0$. Recalling the definition of the function $A_k$ in \eqref{def A k}, we can write immediately
\begin{align}\label{v0}
v_0(\tau,y)=\tfrac12 t^{\frac{k-\mu}{2}}\big(u_0(x+A_k(t))+u_0(x-A_k(t))\big).
\end{align} Let us deal with the term $v_3$. Using the explicit expression of $f$, we get
\begin{align*}
v_3(\tau,z) &= 2^{-\sqrt{\widetilde{\delta}\hphantom{l}}}  \int_0^\tau \int_{z-\tau+\widetilde{b}}^{z+\tau-\widetilde{b}} (1-k)^{-2}(1+\widetilde{b})^{\frac{2k}{1-k}}g\big((1+\widetilde{b})^{\frac{1}{1-k}},\tfrac{\widetilde{y}}{1-k}\big)\widetilde{E}\big(\tau,z;\widetilde{b},\widetilde{y};\widetilde{\mu},\widetilde{\nu}^2\big)\, \mathrm{d}\widetilde{y} \, \mathrm{d}\widetilde{b}.
\end{align*} Carrying out the change of variables $y=(1-k)^{-1}\widetilde{y}$, $b=\big(1+\widetilde{b}\big)^{\frac{1}{1-k}}$ and the transformation \eqref{change of variables y, tau} in the last integral, we arrive at
\begin{align}
v_3(\tau,z) &= 2^{-\frac{\sqrt{\delta}}{1-k}}  \int_1^t \int_{x-\phi_k(t)+\phi_k(b)}^{x+\phi_k(t)-\phi_k(b)}g(b,y) \, b^k  \widetilde{E}\big(t^{1-k}-1,(1-k)x;b^{1-k}-1,(1-k)y;\widetilde{\mu},\widetilde{\nu}^2\big)\, \mathrm{d}y \, \mathrm{d}b\notag \\
&=   \int_1^t \int_{x-\phi_k(t)+\phi_k(b)}^{x+\phi_k(t)-\phi_k(b)}g(b,y) \, E\big(t,x;b,y;\mu,\nu^2,k\big)\, \mathrm{d}y \, \mathrm{d}b,\label{v3}
\end{align} where in the second step we used the identity
\begin{align*}
  \widetilde{E}\big(t^{1-k}-1,(1-k)x;b^{1-k}-1,(1-k)y;\widetilde{\mu},\widetilde{\nu}^2\big) = 2^{\frac{\sqrt{\delta}}{1-k}}  b^{-k} E\big(t,x;b,y;\mu,\nu^2,k\big),
\end{align*} and the definition in \eqref{def E(t,x;b,y)}.

Finally, we deal with the functions $v_1,v_2$. Performing the change of variables $y=(1-k)^{-1}\widetilde{y}$ and using \eqref{change of variables y, tau}, we find
\begin{align*}
v_1(\tau,z) &\doteq 2^{-\frac{\sqrt{\delta}}{1-k}}  (1-k)  \int_{x-A_k(t)}^{x+A_k(t)} u_0(y)\widetilde{K}_0\big(t^{1-k}-1,(1-k)x;(1-k)y;\widetilde{\mu},\widetilde{\nu}^2\big)\, \mathrm{d}y,\\
 v_2(\tau,z) &\doteq  2^{-\frac{\sqrt{\delta}}{1-k}} \int_{x-A_k(t)}^{x+A_k(t)}  \big(u_1(y)+(\mu-k)u_0(y) \big)\widetilde{K}_1\big(t^{1-k}-1,(1-k)x;(1-k)y;\widetilde{\mu},\widetilde{\nu}^2\big)\,\mathrm{d}y.
\end{align*} By elementary computations we have
\begin{align*}
\widetilde{K}_0\big(t^{1-k}-1,(1-k)x;(1-k)y;\widetilde{\mu},\widetilde{\nu}^2\big) &= - 2^{\frac{\sqrt{\delta}}{1-k}}  \frac{\partial}{\partial \widetilde{b}} \Big(\big(1+\widetilde{b}\big)^{-\frac{k}{1-k}} E\big(t,x;\big(1+\widetilde{b}\big)^{\frac{1}{1-k}},y;\mu,\nu^2,k\big)\Big)\Big|_{\widetilde{b}=0}, \\
\widetilde{K}_1\big(t^{1-k}-1,(1-k)x;(1-k)y;\widetilde{\mu},\widetilde{\nu}^2\big) &=  2^{\frac{\sqrt{\delta}}{1-k}}  E\big(t,x;1,y;\mu,\nu^2,k\big).
\end{align*} Considering the transformation $b=\big(1+\widetilde{b}\big)^{\frac{1}{1-k}}$ and using the relation
\begin{align*}
 \bigg(\frac{\partial}{\partial \widetilde{b}} \, \bigg)_{\widetilde{b}=0} = \frac{1}{1-k}  \bigg(\frac{\partial}{\partial b} \, \bigg)_{b=1} ,
\end{align*} we obtain 
\begin{align*}
(1-k) \widetilde{K}_0 \big(t^{1-k}-1,(1-k)x&;(1-k)y;\widetilde{\mu},\widetilde{\nu}^2\big) \\ &= - 2^{\frac{\sqrt{\delta}}{1-k}}  \frac{\partial}{\partial b} \Big(b^{-k} E\big(t,x;b,y;\mu,\nu^2,k\big)\Big)\Big|_{b=1} \\
&=  2^{\frac{\sqrt{\delta}}{1-k}} k  E\big(t,x;1,y;\mu,\nu^2,k\big)- 2^{\frac{\sqrt{\delta}}{1-k}}  \frac{\partial}{\partial b} \Big( E\big(t,x;b,y;\mu,\nu^2,k\big)\Big)\Big|_{b=1}.
\end{align*} Combining the previous representations for $\widetilde{K}_0, \widetilde{K}_1$, we conclude
\begin{align}
v_1(\tau,z)+v_2(\tau,z) & =  \int_{x-A_k(t)}^{x+A_k(t)} u_0(y)\left(\mu  E\big(t,x;1,y;\mu,\nu^2,k\big)-  \frac{\partial}{\partial b} \Big( E\big(t,x;b,y;\mu,\nu^2,k\big)\Big)\Big|_{b=1}\right) \mathrm{d}y\notag \\
& \qquad + \int_{x-A_k(t)}^{x+A_k(t)} u_1(y)  E\big(t,x;1,y;\mu,\nu^2,k\big)\, \mathrm{d}y. \label{v1+v2}
\end{align}
Summarizing, from \eqref{v sum vj}, \eqref{v0}, \eqref{v3} and \eqref{v1+v2} it follows immediately \eqref{representation formula 1d case}. This completes the proof.
\end{proof}

\begin{corollary}[Representation formula in EdS spacetime] \label{Cor representation formula 1d case EdS}
Let $n=1$ and $k\in(0,1)$. We assume $u_0\in \mathcal{C}^2(\mathbb{R})$, $u_1\in \mathcal{C}^1(\mathbb{R})$ and $g\in \mathcal{C}^{0,1}_{t,x}([1,\infty)\times \mathbb{R})$. Then, a representation formula for the solution $u$ to the Cauchy problem associated with the linear wave equation in Einstein-de Sitter spacetime
\begin{align}\label{linear inhomog CP 1d EdS}
\begin{cases} \partial_t^2 u- t^{-2k} \partial_x^2 u +2 \, t^{-1}\partial_t u=g(t,x), &  x\in \mathbb{R}, \ t>1,\\
u(1,x)=u_0(x), & x\in \mathbb{R}, \\ \partial_t u(1,x)=u_1(x), & x\in \mathbb{R},
\end{cases}
\end{align}
 is given by
\begin{align}
u(t,x) &= \frac{1}{2}t^{\frac{k}{2}-1}\big(u_0(x+A_k(t))+u_0(x-A_k(t))\big)+\int_{x-A_k(t)}^{x+A_k(t)} u_0(y) K_0\big(t,x;y;2,0,k\big)\, \mathrm{d}y \notag \\ & \quad +\int_{x-A_k(t)}^{x+A_k(t)} u_1(y)K_1\big(t,x;y;2,0,k\big)\, \mathrm{d}y  + \int_0^t \int_{x-\phi_k(t)+\phi_k(b)}^{x+\phi_k(t)-\phi_k(b)} g(b,y) E\big(t,x;b,y;2,0,k\big) \, \mathrm{d}y\, \mathrm{d}b. \label{representation formula 1d case EdS}
\end{align} Here the kernel function $E$ is defined by
\begin{align}
E\big(t,x;b,y;2,0,k\big) & \doteq c \,  t^{-1} b \left((\phi_k(t)+\phi_k(b))^2-(y-x)^2\right)^{-\gamma}  \mathsf{F}\left(\gamma,\gamma;1; \frac{(\phi_k(t)-\phi_k(b))^2-(y-x)^2}{(\phi_k(t)+\phi_k(b))^2-(y-x)^2} \right),\label{def E(t,x;b,y) EdS}
\end{align} where
\begin{align}
  c &=c(2,0,k) \doteq 2^{-\frac{1}{1-k}}(1-k)^{\frac{k}{1-k}} \quad \mbox{and} \quad \gamma = \gamma(2,0,k)\doteq -\frac{k}{2(1-k)}, \label{def gamma mu=2} 
\end{align}  and $\mathsf{F}(\alpha_1,\alpha_2;\beta; z)$ denotes the Gauss hypergeometric function, while the kernel functions $K_0,K_1$ appearing in the integral terms involving the Cauchy data are given by \eqref{def K0(t,x;y)} and \eqref{def K1(t,x;y)}, respectively, for the special values $(\mu,\nu^2)=(2,0)$.
\end{corollary}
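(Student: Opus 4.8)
The plan is to obtain the corollary as the direct specialization of Theorem~\ref{Thm representation formula 1d case} to the parameter values $(\mu,\nu^2)=(2,0)$, since the linear Cauchy problem \eqref{linear inhomog CP 1d EdS} is precisely \eqref{linear inhomog CP 1d} with this choice of coefficients. Thus the whole argument reduces to substituting $\mu=2$ and $\nu^2=0$ into the representation formula \eqref{representation formula 1d case} and simplifying the exponents and constants appearing in the kernels.

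First I would compute the discriminant $\delta$ defined in \eqref{def delta}. For $\mu=2$ and $\nu^2=0$ one has
\[
\delta=(\mu-1)^2-4\nu^2=(2-1)^2-0=1,
\]
so that $\sqrt{\delta}=1$. This single value is what makes all the subsequent simplifications go through.

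Next I would substitute into the building blocks of the general formula. The scalar prefactor in the first term becomes $\tfrac{1}{2}t^{(k-\mu)/2}=\tfrac{1}{2}t^{k/2-1}$, matching the claimed coefficient. For the kernel $E$ in \eqref{def E(t,x;b,y)}, the two exponents simplify to
\[
-\tfrac{\mu}{2}+\tfrac{1-\sqrt{\delta}}{2}=-1,\qquad \tfrac{\mu}{2}+\tfrac{1-\sqrt{\delta}}{2}=1,
\]
which reproduces the factors $t^{-1}$ and $b$ in \eqref{def E(t,x;b,y) EdS}. From \eqref{def gamma} I would then read off
\[
\gamma=\frac{1}{2}-\frac{\sqrt{\delta}}{2(1-k)}=\frac{1}{2}-\frac{1}{2(1-k)}=-\frac{k}{2(1-k)},
\]
and likewise, using $-1+\tfrac{1}{1-k}=\tfrac{k}{1-k}$,
\[
c=2^{-\frac{\sqrt{\delta}}{1-k}}(1-k)^{-1+\frac{\sqrt{\delta}}{1-k}}=2^{-\frac{1}{1-k}}(1-k)^{\frac{k}{1-k}}.
\]
These are exactly the constants in \eqref{def gamma mu=2}.

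With these substitutions in hand, the kernels $K_0$ and $K_1$ in \eqref{def K0(t,x;y)}--\eqref{def K1(t,x;y)} specialize to their values at $(\mu,\nu^2)=(2,0)$ with no further computation, and the representation formula \eqref{representation formula 1d case EdS} follows immediately from \eqref{representation formula 1d case}. The argument carries essentially no analytic difficulty; the only points requiring care are the elementary manipulations of the exponents, in particular the identity $-1+\tfrac{1}{1-k}=\tfrac{k}{1-k}$ entering the constant $c$ and the fact that $\gamma=-\tfrac{k}{2(1-k)}<0$, so that the weight $\big((\phi_k(t)+\phi_k(b))^2-(y-x)^2\big)^{-\gamma}$ actually carries a positive power of its argument.
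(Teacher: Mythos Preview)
Your proposal is correct and is exactly the approach the paper intends: the corollary is presented without a separate proof because it is simply Theorem~\ref{Thm representation formula 1d case} specialized to $(\mu,\nu^2)=(2,0)$, and your computations of $\delta=1$, the exponents $-1$ and $1$ in the kernel $E$, and the constants $\gamma=-\tfrac{k}{2(1-k)}$ and $c=2^{-1/(1-k)}(1-k)^{k/(1-k)}$ are all correct.
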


\begin{corollary} [Representation formula for the Tricomi-type equation] \label{Cor representation formula 1d case Tricomi neg}
Let $n=1$ and $k\in(0,1)$. We assume $u_0\in \mathcal{C}^2(\mathbb{R})$, $u_1\in \mathcal{C}^1(\mathbb{R})$ and $g\in \mathcal{C}^{0,1}_{t,x}([1,\infty)\times \mathbb{R})$. Then, a representation formula for the solution $u$ to the Cauchy problem associated with the linear Tricomi-type equation
\begin{align}\label{linear inhomog CP 1d Tricomi neg}
\begin{cases} \partial_t^2 u- t^{-2k} \partial_x^2 u =g(t,x), &  x\in \mathbb{R}, \ t>1,\\
u(1,x)=u_0(x), & x\in \mathbb{R}, \\ \partial_t u(1,x)=u_1(x), & x\in \mathbb{R},
\end{cases}
\end{align}
 is given by
\begin{align}
u(t,x) &= \frac{1}{2}t^{\frac{k}{2}}\big(u_0(x+A_k(t))+u_0(x-A_k(t))\big)+\int_{x-A_k(t)}^{x+A_k(t)} u_0(y) K_0\big(t,x;y;0,0,k\big)\, \mathrm{d}y \notag \\ & \quad +\int_{x-A_k(t)}^{x+A_k(t)} u_1(y)K_1\big(t,x;y;0,0,k\big)\, \mathrm{d}y  + \int_0^t \int_{x-\phi_k(t)+\phi_k(b)}^{x+\phi_k(t)-\phi_k(b)} g(b,y) E\big(t,x;b,y;0,0,k\big) \, \mathrm{d}y\, \mathrm{d}b. \label{representation formula 1d case Tricomi neg}
\end{align} Here the kernel function $E$ is defined by
\begin{align}
E\big(t,x;b,y;0,0,k\big) & \doteq c \, \left((\phi_k(t)+\phi_k(b))^2-(y-x)^2\right)^{-\gamma}  \mathsf{F}\left(\gamma,\gamma;1; \frac{(\phi_k(t)-\phi_k(b))^2-(y-x)^2}{(\phi_k(t)+\phi_k(b))^2-(y-x)^2} \right),\label{def E(t,x;b,y) Tricomi neg}
\end{align} where
\begin{align}
  c &=c(0,0,k) \doteq 2^{-\frac{1}{1-k}}(1-k)^{\frac{k}{1-k}} \quad \mbox{and} \quad \gamma = \gamma(0,0,k)\doteq -\frac{k}{2(1-k)}, \label{def gamma mu=0} 
\end{align}  and $\mathsf{F}(\alpha_1,\alpha_2;\beta; z)$ denotes the Gauss hypergeometric function, while the kernel functions $K_0,K_1$ appearing in the integral terms involving the Cauchy data are given by \eqref{def K0(t,x;y)} and \eqref{def K1(t,x;y)}, respectively, for the special values $(\mu,\nu^2)=(0,0)$.
\end{corollary}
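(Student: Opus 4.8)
The plan is to obtain Corollary \ref{Cor representation formula 1d case Tricomi neg} as the particular case of Theorem \ref{Thm representation formula 1d case} corresponding to the parameter choice $(\mu,\nu^2)=(0,0)$. Indeed, the linear Tricomi-type Cauchy problem \eqref{linear inhomog CP 1d Tricomi neg} is nothing but \eqref{linear inhomog CP 1d} with vanishing damping and mass coefficients, and since $\mu=0$ and $\nu^2=0$ are admissible (nonnegative) values, Theorem \ref{Thm representation formula 1d case} applies verbatim under the stated regularity assumptions on $u_0,u_1,g$. Hence it suffices to specialize each quantity in \eqref{representation formula 1d case} and to check that it reduces to the corresponding expression in \eqref{representation formula 1d case Tricomi neg}.

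First I would evaluate the discriminant \eqref{def delta} at $(\mu,\nu^2)=(0,0)$, finding $\delta(0,0)=(0-1)^2-4\cdot 0=1$, so that $\sqrt{\delta}=1$. Substituting this value into the definitions \eqref{def gamma} of $c$ and $\gamma$, and using the elementary identities $-1+\tfrac{1}{1-k}=\tfrac{k}{1-k}$ and $\tfrac12-\tfrac{1}{2(1-k)}=-\tfrac{k}{2(1-k)}$, reproduces exactly the constants
\[
c(0,0,k)=2^{-\frac{1}{1-k}}(1-k)^{\frac{k}{1-k}}, \qquad \gamma(0,0,k)=-\frac{k}{2(1-k)}
\]
appearing in \eqref{def gamma mu=0}.

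Next I would substitute $\sqrt{\delta}=1$ and $\mu=0$ into the kernel $E$ defined in \eqref{def E(t,x;b,y)}. The key observation is that both power prefactors collapse: since $\tfrac{1-\sqrt{\delta}}{2}=0$ and $\tfrac{\mu}{2}=0$, one has $t^{-\frac{\mu}{2}+\frac{1-\sqrt\delta}{2}}=t^{0}=1$ and $b^{\frac{\mu}{2}+\frac{1-\sqrt\delta}{2}}=b^{0}=1$, while the factor $\big((\phi_k(t)+\phi_k(b))^2-(y-x)^2\big)^{-\gamma}$ and the Gauss hypergeometric term are left untouched. This yields \eqref{def E(t,x;b,y) Tricomi neg}. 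Analogously, the leading term in \eqref{representation formula 1d case} carries the factor $t^{\frac{k-\mu}{2}}=t^{\frac{k}{2}}$, whereas the kernels $K_0$ and $K_1$ are, by \eqref{def K0(t,x;y)}--\eqref{def K1(t,x;y)}, simply the specializations to $(\mu,\nu^2)=(0,0)$ and require no additional computation.

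There is no genuine obstacle in this argument, which is a direct specialization; the only point demanding care is the bookkeeping of the exponents of $t$ and $b$. It is worth noting that, since $\delta(2,0)=\delta(0,0)=1$, the constants $c$ and $\gamma$ coincide with those of the Einstein--de Sitter case treated in Corollary \ref{Cor representation formula 1d case EdS}; consequently the sole difference between the two models is encoded in the $\mu$-dependent factors $t^{-\mu/2}$, $b^{\mu/2}$ and in the leading term $t^{(k-\mu)/2}$, which here collapse to $1$, $1$, and $t^{k/2}$ instead of $t^{-1}$, $b$, and $t^{k/2-1}$. Collecting \eqref{def gamma mu=0}, \eqref{def E(t,x;b,y) Tricomi neg}, and the specialized leading term into \eqref{representation formula 1d case} then delivers \eqref{representation formula 1d case Tricomi neg}, completing the proof.
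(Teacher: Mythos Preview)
Your proposal is correct and is precisely the approach intended by the paper: Corollary \ref{Cor representation formula 1d case Tricomi neg} is stated as an immediate specialization of Theorem \ref{Thm representation formula 1d case} at $(\mu,\nu^2)=(0,0)$, and the paper gives no separate proof. Your verification of the exponents (in particular that $\sqrt{\delta}=1$ forces both $t$- and $b$-prefactors in $E$ to collapse to $1$) and your remark linking the kernels to the EdS case via the factor $b/t$ align exactly with the paper's Remark \ref{Remark kernel functions E}.
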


\begin{remark} \label{Remark kernel functions E} Note that the kernel functions in \eqref{def E(t,x;b,y) EdS} and in \eqref{def E(t,x;b,y) Tricomi neg} are strongly related, since the following relation holds
\begin{align*}
E\big(t,x;b,y;2,0,k\big)= \frac{b}{t} \, E\big(t,x;b,y;0,0,k\big).
\end{align*}
\end{remark}

\begin{remark}
Let us stress that representation formulae for the solutions to the wave equation in Einstein-de Sitter spacetime and to the generalized Tricomi equation (even in the case with speed of propagation with negative power) have already been established in the literature (see for example \cite{GKY10} and \cite{Yag15}, respectively) when the Cauchy data are prescribed at the initial time $t=0$. However, since in the present work we prescribe the Cauchy data at the initial data $t=1$ and the considered models are not invariant by time translation (due to the presence of time-dependent coefficients), the representation formulae from Corollaries \ref{Cor representation formula 1d case EdS} and \ref{Cor representation formula 1d case Tricomi neg} are not redundant and will play a crucial role in the proof of Theorems \ref{Thm blow-up EdS} and \ref{Thm blow-up Tricomi negative}, respectively.
\end{remark}

\begin{remark} \label{Remark Hypg funct}
In the next sections we shall estimate from below the kernel function $E$ on suitable subsets of the forward light-cone. According to this purpose, the following lower bound estimate for the Gauss hypergeometric function is very helpful
\begin{align} \label{lb Guass Hyp funct}
\mathsf{F}(\alpha,\alpha;\beta; z)\geqslant 1,
\end{align} for any $z\in[0,1)$ and for $\alpha\in\mathbb{R},\beta>0$. The previous estimate is a direct consequence of the series expansion for $\mathsf{F}(\alpha,\alpha;\beta; z)$.  Furthermore, for $\alpha\in\mathbb{R},\beta>0$ such that $\beta-2\alpha>0$ we have that \eqref{lb Guass Hyp funct} is sharp since $\mathsf{F}(\alpha,\alpha;\beta; z)$ can be estimated from above by a positive constant independent of $z$ on $[0,1)$.
\end{remark}

\section{Proof of the main Theorems}

In the present section, we prove the main blow-up results by using a generalization of Zhou's blow-up argument on the characteristic line $A_k(t)-z=R$. In place of the d'Alembert's formula we shall employ the integral representation formulae from Theorems \ref{Thm blow-up EdS} and \ref{Thm blow-up Tricomi negative} obtained via Yagdjian's Integral Transform approach. The main steps in the proof are inspired by the computations in \cite{Zhou01,PalTu19,LP20}.

\subsection{Proof of Theorem \ref{Thm blow-up EdS}} \label{Subsection proof Thm Eds}

Let $u$ be a local (in time) solution to the Cauchy problem \eqref{semilinear WE in EdS derivative type}. In order to reduce our problem to a one-dimensional problem in space, we will introduce an auxiliary function which depends on the time variable and only on the first space variable. This step is achieved by integrating $u$ with respect to the remaining $(n-1)$ spatial variables. Thus, if we denote $x=(z,w)$ with $z\in \mathbb{R}$ and $w\in \mathbb{R}^{n-1}$, then, we deal with the function
\begin{align*}
\mathcal{U}(t,z)\doteq \int_{\mathbb{R}^{n-1}} u(t,z,w) \, \mathrm{d} w \qquad \mbox{for any} \ \  t>0, z\in\mathbb{R}.
\end{align*} 
%Obviously, in the one dimensional case we work simply with $u$ instead of introducing $\mathcal{U}$.
 Hereafter, we consider just the case $n\geqslant 2$ for the sake of brevity and of readability. Nevertheless, it is possible to proceed in the exact same way for $n=1$, simply by working with $u$ instead of introducing $\mathcal{U}$. In order to prescribe the initial values of $\mathcal{U}$, we set
\begin{align*}
\mathcal{U}_0(z)\doteq \int_{\mathbb{R}^{n-1}} u_0(z,w) \, \mathrm{d} w , \quad \mathcal{U}_1(z)\doteq \int_{\mathbb{R}^{n-1}} u_1(z,w) \, \mathrm{d} w\qquad \mbox{for any} \ \  z\in\mathbb{R}.
\end{align*} According to the statement of Theorem \ref{Thm blow-up EdS} we require that $u_0,u_1$ are compactly supported with support contained in $B_R$. Hence, $\mathcal{U}_0,\mathcal{U}_1$ are compactly supported too, with supports contained in $(-R,R)$. Analogously, since $\supp u(t,\cdot)\subset B_{R+A_k(t)}$ for any $t\in (1,T)$ we get the following support condition for $\mathcal{U}$
\begin{align} \label{supp mathcal U}
\supp \mathcal{U}(t,\cdot)\subset (-(R+A_k(t)),R+A_k(t)) \qquad \mbox{for any} \ \  t\in(1,T).
\end{align}  
Therefore, $\mathcal{U}$ solves the following Cauchy problem
\begin{align*} 
\begin{cases}
\partial_t^2\mathcal{U}- t^{-2k}\partial_z^2\mathcal{U}+2t^{-1} \partial_t\mathcal{U}=\int_{\mathbb{R}^{n-1}} |\partial_t u(t,z,w)|^p \, \mathrm{d} w , &  z\in \mathbb{R} , \ t>1,\\
\mathcal{U}(1,z)= \varepsilon\, \mathcal{U}_0(z), & z\in \mathbb{R} , \\ \partial_t\mathcal{U}(1,z)= \varepsilon\, \mathcal{U}_1(z) , & z\in \mathbb{R}.
\end{cases}
\end{align*} By Corollary \ref{Cor representation formula 1d case EdS}, we obtain the representation
\begin{align*}
\mathcal{U}(t,z) &= 2^{-1}\varepsilon t^{\frac{k}{2}-1}\big( \mathcal{U}_0(z+A_k(t))+ \mathcal{U}_0(z-A_k(t))\big)+\varepsilon\int_{z-A_k(t)}^{z+A_k(t)}  \mathcal{U}_0(y) K_0\big(t,z;y;2,0,k\big)\, \mathrm{d}y \notag \\ & \quad +\varepsilon\int_{z-A_k(t)}^{z+A_k(t)}  \mathcal{U}_1(y)K_1\big(t,z;y;2,0,k\big)\, \mathrm{d}y  \notag \\ & \quad  + \int_0^t \int_{z-\phi_k(t)+\phi_k(b)}^{z+\phi_k(t)-\phi_k(b)} \int_{\mathbb{R}^{n-1}} |\partial_t u(b,y,w)|^p \, \mathrm{d} w \, E\big(t,z;b,y;2,0,k\big) \, \mathrm{d}y\, \mathrm{d}b,
\end{align*} where the kernel function $E$ is defined by \eqref{def E(t,x;b,y) EdS}, while $K_0,K_1$ are defined in \eqref{def K0(t,x;y)} and in \eqref{def K1(t,x;y)}, respectively, for $(\mu,\nu^2)=(2,0)$.

Thanks to the sign assumptions on $u_0,u_1$ it results that $\mathcal{U}_0,\mathcal{U}_1$ are nonnegative functions as well. 
Consequently, from the previous identity we get 
\begin{align*}
\mathcal{U}(t,z) &\geqslant  \varepsilon \int_{z-A_k(t)}^{z+A_k(t)}  \left( \mathcal{U}_0(y) K_0\big(t,z;y;2,0,k\big) + \mathcal{U}_1(y)K_1\big(t,z;y;2,0,k\big)\right) \, \mathrm{d}y \notag \\ & \quad  + \int_0^t \int_{z-\phi_k(t)+\phi_k(b)}^{z+\phi_k(t)-\phi_k(b)} \int_{\mathbb{R}^{n-1}} |\partial_t u(b,y,w)|^p \, \mathrm{d} w \, E\big(t,z;b,y;2,0,k\big) \, \mathrm{d}y\, \mathrm{d}b\doteq \varepsilon J(t,z)+I(t,z).
\end{align*}

We begin with the estimate of the integral $J(t,z)$ that involves the Cauchy data. 

For any $y\in[z-A_k(t),z+A_k(t)]$, since $\gamma(2,0,k)=-k/(2-2k)<0$ and $1-2\gamma(2,0,k) =1/(1-k)>0$ (this second condition implies in particular that the hypergeometric function in \eqref{def E(t,x;b,y) EdS} can be controlled from above and from below by positive constants, cf. Remark \ref{Remark Hypg funct}), it results
\begin{align}
K_1\big(t,z;y;2,0,k\big) &=E\big(t,z;1,y;2,0,k\big) \notag
\\ &= c t^{-1} \left((\phi_k(t)+\phi_k(1))^2-(y-z)^2\right)^{-\gamma} \mathsf{F}\left(\gamma,\gamma;1; \frac{(\phi_k(t)-\phi_k(1))^2-(y-z)^2}{(\phi_k(t)+\phi_k(1))^2-(y-z)^2} \right) \notag\\ 
& \gtrsim t^{-1} (4\phi_k(1)\phi_k(t))^{-\gamma}  \gtrsim t^{\frac{k}{2}-1}.\label{lower bound K1}
\end{align} Moreover, for any $y\in[z-A_k(t),z+A_k(t)]$ we may prove that
\begin{align}
K_0\big(t,z;y;2,0,k\big) & = 2E\big(t,z;1,y;2,0,k\big) -\frac{\partial}{\partial b} E\big(t,z;b,y;2,0,k\big)\Big|_{b=1} \gtrsim t^{\frac{k}{2}-1}. % \geqslant 0.
\label{nonnegativity K0}
\end{align}
Indeed, by straightforward computations we find
\begin{align*}
\frac{\partial}{\partial b} E\big(t,z;b,y;2,0,k\big) & =ct^{-1}  \left((\phi_k(t)+\phi_k(b))^2-(y-z)^2\right)^{-\gamma} \\ & \quad \times \left\{\left(1+\frac{k\phi_k(b)(\phi_k(t)+\phi_k(b))}{(\phi_k(t)+\phi_k(b))^2-(y-z)^2}\right)\mathsf{F}(\gamma,\gamma;1;\zeta)+\mathsf{F}(\gamma+1,\gamma+1;2;\zeta)\frac{\partial \zeta}{\partial b}\right\},
\end{align*} where $$\zeta=\zeta(t,z;b,y;k)\doteq  \frac{(\phi_k(t)-\phi_k(b))^2-(y-z)^2}{(\phi_k(t)+\phi_k(b))^2-(y-z)^2}.  $$ Due to 
\begin{align*}
\frac{\partial \zeta}{\partial b}(t,z;b,y;k) =-4b^{-k}\phi_k(t)  \frac{(\phi_k(t))^2-(\phi_k(b))^2-(y-z)^2}{[(\phi_k(t)+\phi_k(b))^2-(y-z)^2]^2},
\end{align*} 
it follows
\begin{align*}
\frac{\partial \zeta}{\partial b}(t,z;b,y;k)\Big|_{b=1} =-4\phi_k(t)  \frac{(\phi_k(t))^2-(\phi_k(1))^2-(y-z)^2}{[(\phi_k(t)+\phi_k(1))^2-(y-z)^2]^2} \leqslant -  \frac{8\phi_k(t)\phi_k(1)(\phi_k(t)-\phi_k(1))}{[(\phi_k(t)+\phi_k(1))^2-(y-z)^2]^2}  \leqslant 0,
\end{align*} for any $y\in [z-A_k(t),z+A_k(t)]$.
Therefore, we may neglect the influence of the term $\mathsf{F}(\gamma+1,\gamma+1;2;\zeta)$ when we estimate the kernel $K_0$ from below. Hence,
\begin{align}
K_0\big(t,z;y;2,0,k\big) & \geqslant  ct^{-1}  \left((\phi_k(t)+\phi_k(1))^2-(y-z)^2\right)^{-\gamma} \left(1-\frac{k\phi_k(1)(\phi_k(t)+\phi_k(1))}{(\phi_k(t)+\phi_k(1))^2-(y-z)^2}\right)\mathsf{F}(\gamma,\gamma;1;\zeta) \notag \\
& \geqslant  ct^{-1}  \left((\phi_k(t)+\phi_k(1))^2-(y-z)^2\right)^{-\gamma} \left(1-\tfrac{k}{4}(1-\zeta) \right)\mathsf{F}(\gamma,\gamma;1;\zeta)\notag  \\
& \quad -   \tfrac{ck}{(1-k)^2} t^{-1}  \left((\phi_k(t)+\phi_k(1))^2-(y-z)^2\right)^{-\gamma-1}\mathsf{F}(\gamma,\gamma;1;\zeta). \label{intermediate estimate K0}
\end{align} Using the following estimate
\begin{align*}
 \frac{k}{(1-k)^2}  \left((\phi_k(t)+\phi_k(1))^2-(y-z)^2\right)^{-1} \leqslant  \frac{k}{(1-k)^2}   \left(4 \phi_k(t)\phi_k(1)\right)^{-1} = \frac{k}{4t^{1-k}}\leqslant  \frac{k}{4},
\end{align*} for $y\in [z-A_k(t),z+A_k(t)]$ and $t\geqslant 1$, from \eqref{intermediate estimate K0} we derive
\begin{align*}
K_0\big(t,z;y;2,0,k\big) 
& \geqslant  c\left(1-\tfrac{k}{2}+\tfrac{k}{4} \zeta \right) t^{-1}  \left((\phi_k(t)+\phi_k(1))^2-(y-z)^2\right)^{-\gamma}\, \mathsf{F}(\gamma,\gamma;1;\zeta) \\
&   \geqslant  c\left(1-\tfrac{k}{2} \right) t^{-1}  \left((\phi_k(t)+\phi_k(1))^2-A_k(t)^2\right)^{-\gamma}\, \mathsf{F}(\gamma,\gamma;1;\zeta)\\
&  =  c\left(1-\tfrac{k}{2} \right) t^{-1}  \left(4 \phi_k(1)\phi_k(t)\right)^{-\gamma}\, \mathsf{F}(\gamma,\gamma;1;\zeta) \gtrsim t^{\frac{k}{2}-1} \mathsf{F}(\gamma,\gamma;1;\zeta),
\end{align*} which implies in turn \eqref{nonnegativity K0} thanks to \eqref{lb Guass Hyp funct}.

Therefore, combining \eqref{lower bound K1} and \eqref{nonnegativity K0}, we obtain
\begin{align*}
J(t,z)\gtrsim \varepsilon t^{\frac{k}{2}-1} \int_{z-A_k(t)}^{z+A_k(t)}  ( \mathcal{U}_0(y) + \mathcal{U}_1(y)) \, \mathrm{d}y.
\end{align*}
From now on, we will work on the characteristic line with equation $A_k(t)-z=R$ for $z\geqslant R$. For $z\geqslant R$ such that $A_k(t)-z=R$ it holds $$[-R,R]\subset [z-A_k(t),z+A_k(t)].$$ Consequently, using the support condition $\mathrm{supp}\, \mathcal{U}_0,\mathrm{supp}\, \mathcal{U}_1\subset (-R,R)$, we conclude
\begin{align}
J(t,z)\gtrsim \varepsilon t^{\frac{k}{2}-1} \int_\mathbb{R} ( \mathcal{U}_0(y) + \mathcal{U}_1(y)) \, \mathrm{d}y = \varepsilon t^{\frac{k}{2}-1} \|u_0+u_1\|_{L^1(\mathbb{R}^n)}, \label{lower bound J}
\end{align} where in the last step we used Fubini's theorem.

Now we estimate the term $I(t,z)$. Clearly, the following support condition  $$\supp \partial_t u(t,\cdot) \subset B_{R+A_k(t)},$$ holds for any $t\in(1,T)$ due the shape of the light-cone. With respect to the $w\in\mathbb{R}^{n-1}$ variable we can express the previous support condition as follows: $$\supp \partial_t u(t,z,\cdot)\subset \big\{w\in \mathbb{R}^{n-1}: |w|\leqslant \left((R+A_k(t))^2-z^2\right)^{1/2} \big\}, $$ for any $t\in(1,T)$ and any $z\in\mathbb{R}$. Then, from H\"older's inequality we derive the estimate
\begin{align*}
  |\partial_t \mathcal{U} (b,y)| & =\Big| \int_{\mathbb{R}^{n-1}} \partial_t u(b, y,w) \, \mathrm{d}w\Big| \\ &  \leqslant \bigg(\int_{\mathbb{R}^{n-1}} |\partial_t u(b, y,w)|^p \, \mathrm{d}w\bigg)^{\frac{1}{p}} \big(\meas_{n-1}\big(\supp \partial_t u(b,y,\cdot)\big)\big)^{1-\frac{1}{p}} \\ & \lesssim \left((R+A_k(b))^2-y^2\right)^{\frac{n-1}{2}\left(1-\frac{1}{p}\right)}\bigg(\int_{\mathbb{R}^{n-1}} |\partial_t u(b, y,w)|^p \, \mathrm{d}w\bigg)^{\frac{1}{p}} .
\end{align*}
Hereafter, the unexpressed multiplicative constants will depend on $n,k,R,p$. 
 From the previous inequality, we have
\begin{align*}
\int_{\mathbb{R}^{n-1}} |\partial_t u(b, y,w)|^p \, \mathrm{d}w  & \gtrsim  \left((R+A_k(b))^2-y^2\right)^{-\frac{n-1}{2}(p-1)} |\partial_t \mathcal{U} (b,y)|^p,
\end{align*}  which provides in turn the following estimate
\begin{align*}
I(t,z) &  \gtrsim  \int_1^t \int_{z-\phi_k(t)+\phi_k(b)}^{z+\phi_k(t)-\phi_k(b)} \left((R+A_k(b))^2-y^2\right)^{-\frac{n-1}{2}(p-1)} |\partial_t \mathcal{U} (b,y)|^p  E\big(t,z;b,y;2,0,k\big) \, \mathrm{d}y\, \mathrm{d}b \\ 
& =  \int_{z-A_k(t)}^{z+A_k(t)} \int_1^{\phi^{-1}_k(\phi_k(t)-|z-y|)} \left((R+A_k(b))^2-y^2\right)^{-\frac{n-1}{2}(p-1)} |\partial_t \mathcal{U}(b,y)|^p  E\big(t,z;b,y;2,0,k\big)  \, \mathrm{d}b \, \mathrm{d}y ,
\end{align*} where we applied Fubini's theorem in the last equality. 
Since we are working on the  characteristic line $A_k(t)-z=R$, for $z\geqslant R$ it holds $[z-A_k(t),z+A_k(t)]\supset [R,z]$. Consequently, we can shrink the domain of integration in the last lower bound estimate for $I(t,z)$, obtaining 
\begin{align*}
I(t,z) &  \gtrsim   \int_{R}^{z} \int_1^{\phi^{-1}_k(\phi_k(t)-|z-y|)} \left((R+A_k(b))^2-y^2\right)^{-\frac{n-1}{2}(p-1)} |\partial_t \mathcal{U}(b,y)|^p  E\big(t,z;b,y;2,0,k\big)  \, \mathrm{d}b \, \mathrm{d}y.
\end{align*}
On the characteristic line $A_k(t)-z=R$ and for $y\in[R,z]$, it holds
\begin{align*}
\phi^{-1}_k(\phi_k(t)-|z-y|) = \phi^{-1}_k(\phi_k(t)-z+y)=  \phi^{-1}_k(\phi_k(1)+R+y) = A_k^{-1}(R+y),
\end{align*} where in  the last step we used the relation $$ A_k^{-1}(\tau)=\phi^{-1}_k(\phi_k(1)+\tau).$$ Moreover, for $y\in [R,z]$ it holds
\begin{align*}
1\leqslant  \phi^{-1}_k(\phi_k(1)+y-R)=A_k^{-1}(y-R), 
\end{align*} thanks to the monotonicity of $\phi^{-1}_k$. Thus, after a further shrinking of the domain of integration, we get
\begin{align*}
I(t,z) &  \gtrsim   \int_{R}^{z} \int_{A_k^{-1}(y-R)}^{A_k^{-1}(y+R)} \left((R+A_k(b))^2-y^2\right)^{-\frac{n-1}{2}(p-1)} |\partial_t \mathcal{U}(b,y)|^p  E\big(t,z;b,y;2,0,k\big)  \, \mathrm{d}b \, \mathrm{d}y \\
&  \gtrsim   \int_{R}^{z} \left((R+A_k(A_k^{-1}(y+R)))^2-y^2\right)^{-\frac{n-1}{2}(p-1)} \int_{A_k^{-1}(y-R)}^{A_k^{-1}(y+R)}  |\partial_t \mathcal{U}(b,y)|^p  E\big(t,z;b,y;2,0,k\big)  \, \mathrm{d}b \, \mathrm{d}y  \\
&  \gtrsim   \int_{R}^{z} (R+y)^{-\frac{n-1}{2}(p-1)} \int_{A_k^{-1}(y-R)}^{A_k^{-1}(y+R)}  |\partial_t \mathcal{U}(b,y)|^p  E\big(t,z;b,y;2,0,k\big)  \, \mathrm{d}b \, \mathrm{d}y,
\end{align*} for $z\geqslant R$ such that $A_k(t)-z=R$. 
Now we estimate the kernel function $E(\cdot;2,0,k)$ from below on the restricted domain of integration. Due to $1-2\gamma(2,0,k)>0$ the factor of $E$ involving the hypergeometric function can be controlled from above and from below with a positive constant. Consequently, for $y\in[R,z]$ and $b\in [A_k^{-1}(y-R),A_k^{-1}(y+R) ]$, on the characteristic $A_k(t)-z=R$ we have
\begin{align*}
 E\big(t,z;b,y;2,0,k\big) &= c \, \frac{b}{t}   \left((\phi_k(t)+\phi_k(b))^2-(y-z)^2\right)^{-\gamma} \mathsf{F}\left(\gamma,\gamma;1; \frac{(\phi_k(t)-\phi_k(b))^2-(y-z)^2}{(\phi_k(t)+\phi_k(b))^2-(y-z)^2} \right) \\
 &\gtrsim  \frac{b}{t}   \left((\phi_k(t)+\phi_k(b))^2-(y-z)^2\right)^{-\gamma} = \frac{b}{t}   \left((A_k(t)+A_k(b)+2\phi_k(1))^2-(y-z)^2\right)^{-\gamma} \\
 &\gtrsim  \frac{b}{t}   \left((A_k(t)-R+y+2\phi_k(1))^2-(y-z)^2\right)^{-\gamma} =  \frac{b}{t}   \left((z+y+2\phi_k(1))^2-(y-z)^2\right)^{-\gamma} \\ & =  \frac{b}{t}   \Big(4(z+\phi_k(1))(y+\phi_k(1))\Big)^{-\gamma} \gtrsim  \frac{A_k^{-1}(y-R)}{t}  (z+R)^{-\gamma}  (y+R)^{-\gamma},
\end{align*} where we used that $\gamma=\gamma(2,0,k)$ is a negative parameter. We remark that  for $y\in[R,z]$, it holds
\begin{align*}
 \frac{A_k^{-1}(y-R)}{A_k^{-1}(y+R)} & =\frac{\phi_k^{-1}(\phi_k(1)+y-R)}{\phi_k^{-1}(\phi_k(1)+y+R)}= \left(\frac{\phi_k(1)+y-R}{\phi_k(1)+y+R}\right)^{\frac{1}{1-k}}= \left(1-\frac{2R}{\phi_k(1)+y+R}\right)^{\frac{1}{1-k}} \\ & \geqslant  \left(1-\frac{2R}{\phi_k(1)+2R}\right)^{\frac{1}{1-k}}=C_{R,k}\,.
\end{align*} Combining the lower bound estimate for $E$ on the domain of integration and the last inequality, we arrive at 
\begin{align}
I(t,z) &  \gtrsim  (z+R)^{-\gamma}   t^{-1}  \int_{R}^{z} (R+y)^{-\frac{n-1}{2}(p-1)-\gamma} A_k^{-1}(y+R) \int_{A_k^{-1}(y-R)}^{A_k^{-1}(y+R)}  |\partial_t \mathcal{U}(b,y)|^p  \, \mathrm{d}b \, \mathrm{d}y \notag \\
&\gtrsim  (z+R)^{-\gamma}   t^{-1}  \int_{R}^{z} (R+y)^{-\frac{n-1}{2}(p-1)-\gamma+\frac{1}{1-k}} \int_{A_k^{-1}(y-R)}^{A_k^{-1}(y+R)}  |\partial_t \mathcal{U}(b,y)|^p  \, \mathrm{d}b \, \mathrm{d}y, \label{lower bound I n1}
\end{align} for $z\geqslant R$ such that $A_k(t)-z=R$.

Applying Jensen's inequality, we get
 \begin{align}
\left| \int_{A_k^{-1}(y-R)}^{A_k^{-1}(y+R)} \partial_t \mathcal{U}(b,y)   \, \mathrm{d}b \,\right|^p & \leqslant \big(A_k^{-1}(y+R)-A_k^{-1}(y-R)\big)^{p-1} \int_{A_k^{-1}(y-R)}^{A_k^{-1}(y+R)} |\mathcal{U}_t(b,y)|^p   \, \mathrm{d}b \notag \\
& \leqslant  (2R)^{p-1} \left(\max_{\tau\in [y-R,y+R]} \frac{\mathrm{d}}{\mathrm{d}\tau}A_k^{-1}(\tau)\right)^{p-1}\int_{A_k^{-1}(y-R)}^{A_k^{-1}(y+R)} |\partial_t \mathcal{U}(b,y)|^p   \, \mathrm{d}b \notag \\
& \lesssim  (\phi_k(1)+y+R)^{\frac{k}{1-k}(p-1)}\int_{A_k^{-1}(y-R)}^{A_k^{-1}(y+R)} |\partial_t \mathcal{U}(b,y)|^p   \, \mathrm{d}b, \label{Jensen mathcal U_t}
 \end{align} where we used $\frac{\mathrm{d}}{\mathrm{d}\tau}A_k^{-1}(\tau)=\big((1-k)(\phi_k(1)+\tau)\big)^{\frac{k}{1-k}}$.
 
 Combining the fundamental theorem of calculus, \eqref{lower bound I n1} and \eqref{Jensen mathcal U_t}, on the characteristic $A_k(t)-z=R$ we find
\begin{align}
I(A_k^{-1}(z+R),z)   
& \gtrsim (z+R)^{-\gamma -\frac{1}{1-k}} \! \int_{R}^{z} \!\left(R+y\right)^{-\frac{n-1}{2}(p-1)-\gamma+\frac{1}{1-k}-\frac{k}{1-k}(p-1)} \left| \int_{A_k^{-1}(y-R)}^{A_k^{-1}(y+R)}\!\partial_t \mathcal{U}(b,y)   \, \mathrm{d}b \right|^p \! \mathrm{d}y \notag \\
& = (z+R)^{-\gamma -\frac{1}{1-k}} \int_{R}^{z} \! \left(R+y\right)^{-\frac{n-1}{2}(p-1)-\gamma+\frac{1}{1-k}-\frac{k}{1-k}(p-1)} |\mathcal{U}(A_k^{-1}(y+R),y) |^p \, \mathrm{d}y,  \label{lower bound I n2}
\end{align} where in the second step we can apply the relation $\mathcal{U}(A_k^{-1}(y-R),y)=0$ due to \eqref{supp mathcal U}.

Using together \eqref{lower bound J} and \eqref{lower bound I n2}, on the characteristic line $A_k(t)-z=R$ for $z\geqslant R$, we obtain
\begin{align} 
(R+z)^{\gamma+\frac{1}{1-k}} &\mathcal{U}(A_k^{-1}(z+R),z)  \notag   \\ & \geqslant C \varepsilon \, \|  u_0+u_1 \|_{L^1(\mathbb{R}^n)} + C \int_{R}^{z} \left(R+y\right)^{-\frac{n-1}{2}(p-1)-\gamma+\frac{1}{1-k}-\frac{k}{1-k}(p-1)} |\mathcal{U}(A_k^{-1}(y+R),y) |^p \, \mathrm{d}y, \label{fundamental inequality for mathcal U}
\end{align} where $C=C(n,k,R,p)>0$ is a suitable multiplicative constant.

We are now able to define $$U(z)\doteq (R+z)^{\gamma+\frac{1}{1-k}} \mathcal{U}(A_k^{-1}(z+R),z), \quad z\geqslant R,$$ whose dynamic will be employed to prove the blow-up result. 

Rewriting  \eqref{fundamental inequality for mathcal U} through $U$, we find
\begin{align}\label{fundamental inequality for U}
U(z)\geqslant C \varepsilon \, \|  u_0+u_1 \|_{L^1(\mathbb{R}^n)}  + C \int_{R}^{z} \left(R+y\right)^{\left(-\frac{n-1}{2}-\frac{1+k}{1-k}-\gamma\right)(p-1)-2\gamma} |U(y) |^p \, \mathrm{d}y \qquad \mbox{for}  \ z\geqslant R.
\end{align} 
Finally, we apply a comparison argument to $U$. We define the auxiliary function $G$ in the following way 
\begin{align*}
G(z)\doteq   M \varepsilon +  C \int_{R}^{z} \left(R+y\right)^{\left(-\frac{n-1}{2}-\frac{1+k}{1-k}-\gamma\right)(p-1)-2\gamma} |U(y) |^p \, \mathrm{d}y  \qquad \mbox{for}  \ z\geqslant R,
\end{align*} where $M\doteq C \| u_0+u_1 \|_{L^1(\mathbb{R}^n)}$. From \eqref{fundamental inequality for U} we get immediately $U\geqslant G$. Furthermore, $G$ fulfills the ordinary differential inequality
\begin{align*}
G'(z) & = C \left(R+z\right)^{\left(-\frac{n-1}{2}-\frac{1+k}{1-k}-\gamma\right)(p-1)-2\gamma} |U(z) |^p \\
& \geqslant C  \left(R+z \right)^{\left(-\frac{n-1}{2}-\frac{1+k}{1-k}-\gamma\right)(p-1)-2\gamma}(G(z) )^p 
\end{align*} 
and satisfies the initial condition $G(R)=M\varepsilon$.
If $p$ satisfies
\begin{align} \label{critical expression}
\left(-\frac{n-1}{2}-\frac{1+k}{1-k}-\gamma\right)(p-1)-2\gamma=-1,
\end{align} then, 
\begin{align}\label{crit case G}
(M\varepsilon)^{1-p}-G(z)^{1-p}\geqslant C (p-1)\log \left(\frac{R+z}{2R}\right) .
\end{align}
We underline that \eqref{critical expression} is equivalent to  $p=p_{\mathrm{Gla}}\big((1-k)n+2k+2\big) $.

 Otherwise, since $G(z)>0$ for any $z\geqslant R$, it follows
\begin{align} 
(M\varepsilon)^{1-p} & -G(z)^{1-p} \notag \\ & \geqslant  \frac{C(1-k)}{\frac{1}{p-1}-\frac{(1-k)n+2k+1}{2}} \Big((R+z)^{1-2\gamma-\frac{(1-k)n+2k+1}{2(1-k)}(p-1)}-(2R)^{1-2\gamma-\frac{(1-k)n+2k+1}{2(1-k)}(p-1)}\Big). \label{subcritical expression}
\end{align}

Thus, if $p\in \big(1,p_{\mathrm{Gla}}\big((1-k)n+2k+2\big)\big)$, then, the multiplicative factor on the right-hand side of \eqref{subcritical expression} is positive. So, we let $\varepsilon_0=\varepsilon_0(n,p,k,u_0,u_1,R)$ sufficiently small such that for any $\varepsilon\in (0,\varepsilon_0]$ it results
\begin{align}
G(z) %& \geqslant \left[(M\varepsilon)^{1-p} +\frac{C(1-k)}{\frac{1}{p-1}-\frac{(1-k)n+2k+1}{2}} \Big((2R)^{1-2\gamma-\frac{(1-k)n+2k+1}{2(1-k)}(p-1)}-(R+z)^{1-2\gamma-\frac{(1-k)n+2k+1}{2(1-k)}(p-1)}\Big) \right]^{-\frac{1}{p-1}}  \notag \\
&\geqslant \left[2(M\varepsilon)^{1-p}-\frac{C(1-k)}{\frac{1}{p-1}-\frac{(1-k)n+2k+1}{2}}(R+z)^{1-2\gamma-\frac{(1-k)n+2k+1}{2(1-k)}(p-1)} \right]^{-\frac{1}{p-1}}. \label{estimate G below subcrit}
\end{align}

In the limit case $p=p_{\mathrm{Gla}}\big((1-k)n+2k+2\big)$, from \eqref{crit case G} we get the estimate
 \begin{align*}
U(z)\geqslant G(z) & \geqslant  \left[(M\varepsilon)^{1-p}-C(p-1)\log \left(\tfrac{R+z}{2R}\right)\right]^{-\frac{1}{p-1}},
 \end{align*} that provides the blow-up in finite time of $U(z)$  and the lifespan estimate $$\log T(\varepsilon)\lesssim \varepsilon^{-(p-1)}.$$
Otherwise, in the case $p\in \big(1,p_{\mathrm{Gla}}\big((1-k)n+2k+2\big)\big)$, the right-hand side of the inequality in \eqref{estimate G below subcrit} blows up for
$$A_k(t)=R+z\approx\varepsilon^{-(1-k)\left(\frac{1}{p-1}-\frac{(1-k)n+2k+1}{2}\right)^{-1}}.$$ Therefore, $G$ (and $U$, in turn) blows up and the following upper bound estimates holds 
 $$T(\varepsilon)\lesssim  \varepsilon^{-\left(\frac{1}{p-1}-\frac{(1-k)n+2k+1}{2}\right)^{-1}}.$$ 
This completes the proof of Theorem \ref{Thm blow-up EdS}.

\subsection{Proof of Theorem \ref{Thm blow-up Tricomi negative}}

The proof of Theorem \ref{Thm blow-up Tricomi negative} is analogous to the one of Theorem \ref{Thm blow-up EdS}. Let us just sketch the key points in the blow-up argument and emphasize the modifications that we have to carry out in comparison to the previous case. Given a local solution $u$ to \eqref{semilinear Tricomi negative derivative type}, we may introduce also in this case the function $\mathcal{U}=\mathcal{U}(t,z)$ for $t\geqslant 1$, $z\in\mathbb{R}$ by integrating with respect to the last $(n-1)$-space variables. Thanks to the assumption $u_0=0$ and Corollary \ref{Cor representation formula 1d case Tricomi neg}, the following representation holds
\begin{align*}
\mathcal{U}(t,z)& = \varepsilon \widetilde{J}(t,z)+\widetilde{I}(t,z),
\end{align*} where
\begin{align*}
 \widetilde{J}(t,z) & \doteq \int_{z-A_k(t)}^{z+A_k(t)}  \mathcal{U}_1(y)K_1\big(t,z;y;0,0,k\big)\, \mathrm{d}y, \\
 \widetilde{I}(t,z)   & \doteq \int_0^t \int_{z-\phi_k(t)+\phi_k(b)}^{z+\phi_k(t)-\phi_k(b)} \int_{\mathbb{R}^{n-1}} |\partial_t u(b,y,w)|^p \, \mathrm{d} w \, E\big(t,z;b,y;0,0,k\big) \, \mathrm{d}y\, \mathrm{d}b.
\end{align*} In order to estimate from below the terms $ \widetilde{J}, \widetilde{I}$, we can proceed very similarly as in the previous proof, keeping in mind the relation pointed out in Remark \ref{Remark kernel functions E} on the fundamental solution $E$. More precisely, in place of \eqref{lower bound J}, we get
\begin{align*}
 \widetilde{J}(t,z)\gtrsim  \varepsilon \, t^{\frac{k}{2}} \|u_1\|_{L^1(\mathbb{R}^n)},
\end{align*} while instead of \eqref{lower bound I n2}, we find
\begin{align*}
\widetilde{I}(A_k^{-1}(z+R),z)   
& \gtrsim (z+R)^{-\gamma }  \int_{R}^{z} \left(R+y\right)^{-\frac{n-1}{2}(p-1)-\gamma-\frac{k}{1-k}(p-1)} |\mathcal{U}(A_k^{-1}(y+R),y) |^p \, \mathrm{d}y, 
\end{align*}  on the characteristic $A_k(t)-z=R$, for $z\geqslant R$.
Therefore, the functional that we have to consider in order to study the blow-up dynamic is  
\begin{align*}
\widetilde{U}(z)\doteq   (z+R)^{\gamma } \mathcal{U}(A_k^{-1}(z+R),z).  
\end{align*} Consequently, the ordinary integral inequality for $\widetilde{U}$ is given by 
\begin{align*}
\widetilde{U}(z)\geqslant \widetilde{C} \varepsilon \, \|  u_1 \|_{L^1(\mathbb{R}^n)}  + \widetilde{C} \int_{R}^{z} \left(R+y\right)^{\left(-\frac{n-1}{2}-\frac{k}{1-k}-\gamma\right)(p-1)-2\gamma} |\widetilde{U}(y) |^p \, \mathrm{d}y \qquad \mbox{for}  \ z\geqslant R,
\end{align*} where $\widetilde{C}>0$ is a suitable positive constant. Finally, we observe that the power for $(y+R)$ in the last integral is equal to $-1$ if and only if $p=p_{\mathrm{Gla}}((1-k)n+2k)$. Repeating the same kind of computations as in Section \ref{Subsection proof Thm Eds}, we obtain the blow-up in finite time for $\widetilde{U}$ provided that $p\in(1,p_{\mathrm{Gla}}((1-k)n+2k)]$ and the corresponding upper bound estimates for the lifespan. We concluded the proof of Theorem \ref{Thm blow-up Tricomi negative}.

\section{Final remarks}

In the present paper, we proved blow-up results for the semilinear model
\begin{align}\label{final CP}
\begin{cases} \partial_t^2u-t^{-2k}\Delta u + \mu t^{-1} \partial_t u=|\partial_t u|^p, &  x\in \mathbb{R}^n, \ t>1,\\
u(1,x)=\varepsilon u_0(x), & x\in \mathbb{R}^n, \\ u_t(1,x)=\varepsilon u_1(x), & x\in \mathbb{R}^n,
\end{cases}
\end{align} when $\mu\in\{0,2\}$, provided that the Cauchy data fulfill suitable sign and support assumptions and that the exponent of the nonlinear term belongs to the following range $$1<p\leqslant p_{\mathrm{Gla}}((1-k)n+2k+\mu).$$ Due to the consistency of this result with other results known in the literature for special values of the parameters $k$ and $\mu$ (namely, for $k=0$ and/or $\mu=0$), we conjecture that the previous upper bound for $p$ could be the critical exponent for \eqref{final CP}. Nevertheless, in order to prove the validity of this conjecture the proof of the global (in time) existence of small data solutions in the case $p> p_{\mathrm{Gla}}((1-k)n+2k+\mu)$ is necessary. 

We point out that in the general case $\mu> 0$, the blow-up argument from Theorems \ref{Thm blow-up EdS} and \ref{Thm blow-up Tricomi negative} does not work sharply, especially for $\mu$ in the interval $(k,2-k)$. In the forthcoming paper \cite{HHP20}, we will study systematically the blow-up dynamic for \eqref{final CP} via a completely different approach.

\section*{Acknowledgments}

A. Palmieri is supported by the GNAMPA project 'Problemi stazionari e di evoluzione nelle equazioni di campo nonlineari dispersive'.

\addcontentsline{toc}{chapter}{Bibliography}

\end{document}